\documentclass{amsart}
\usepackage{graphics,verbatim, amsmath, amssymb, amsthm, amsfonts, epsfig, psfrag}	

\newtheorem{proposition}{Proposition}[section]
\newtheorem{theorem}[proposition]{Theorem}

\newtheorem{lemma}[proposition]{Lemma}
\newtheorem{prop}[proposition]{Proposition}

\newtheorem{counterexample}[proposition]{Counter-example}

\theoremstyle{definition}

\theoremstyle{remark}
\newtheorem{remark}[proposition]{Remark}

\newcommand{\newword}[1]{\textbf{\emph{#1}}}

\newcommand{\from}{\leftarrow}

\newcommand{\g}{\mathbf{g}}

\newcommand{\set}[1]{{\left\lbrace #1 \right\rbrace}}
\newcommand{\pidown}{\pi_\downarrow}

\newcommand{\br}[1]{\langle #1 \rangle}

\newcommand{\join}{\vee}

\renewcommand{\Join}{\bigvee}
\newcommand{\Meet}{\bigwedge}

\newcommand{\ck}{^\vee}

\newcommand{\Tits}{\mathrm{Tits}}
\newcommand{\Cone}{\mathrm{Cone}}

\newcommand{\LL}{\mathcal{L}}

\DeclareMathOperator{\inv}{inv}

\title{Sortable Elements for Quivers with Cycles}
\author{Nathan Reading and David E Speyer}
\thanks{The first author was partially supported by NSA grant H98230-09-1-0056.
The second author was funded by a Research Fellowship from the Clay Mathematics Institute.}

\begin{document}

\begin{abstract}
Each Coxeter element $c$ of a Coxeter group~$W$ defines a subset of~$W$ called the $c$-sortable elements.
The choice of a Coxeter element of~$W$ is equivalent to the choice of an acyclic orientation of the Coxeter diagram of~$W$.
In this paper, we define a more general notion of $\Omega$-sortable elements, where~$\Omega$ is an arbitrary orientation of the diagram, and show that the key properties of $c$-sortable elements carry over to the $\Omega$-sortable elements.
The proofs of these properties rely on reduction to the acyclic case, but the reductions are nontrivial; in particular, the proofs rely on a subtle combinatorial property of the weak order, as it relates to orientations of the Coxeter diagram.
The $c$-sortable elements are closely tied to the combinatorics of cluster algebras with an acyclic seed; the ultimate motivation behind this paper is to extend this connection beyond the acyclic case.
\end{abstract}

\maketitle

\section{Introduction}\label{intro}
The results of this paper are purely combinatorial, but are motivated by questions in the theory of cluster algebras.
To define a cluster algebra, one requires the input data of a skew-symmetrizable integer matrix;
that is to say, an $n \times n$ integer matrix $B$ and a vector of positive integers $(\delta_1, \ldots, \delta_n)$ such that $\delta_i B_{ij} = - \delta_j B_{ji}$. 
(For the experts: we are discussing cluster algebras without coefficients.)
This input data defines a recursion which produces, among other things, a set of \newword{cluster variables}.
Each cluster variable is a rational function in $x_1,\ldots,x_n$, and the cluster variables are grouped into overlapping sets of size $n$, called clusters.
The cluster algebra is the algebra generated, as a ring, by the cluster variables.

Experience has shown\footnote{See~\cite{ca2}, \cite{camb_fan} for direct connections between cluster algebras and root systems; see \cite{BMRRT} and~\cite{Keller}, and the works cited therein, for connections between cluster algebras and quivers,  and see, for example,~\cite{Kac} for the relationship between quivers and root systems.}
 that the properties of the cluster algebra are closely related to the properties of the corresponding Kac-Moody root system, coming from the generalized Cartan matrix $A$ defined by $A_{ii}=2$ and $A_{ij} = - |B_{ij}|$ for $i \neq j$.
Let~$W$ stand for the Weyl group of the Kac-Moody algebra.
From the Cartan matrix, one can read off the \newword{Coxeter diagram} of~$W$.
This is the graph $\Gamma$ whose vertices are labeled by $\{ 1,2, \ldots, n \}$ and where there is an edge connecting $i$ to $j$ if and only if $A_{ij} \neq 0$.
To encode the structure of $B$, it is natural to orient $\Gamma$, directing $i \from j$ if $B_{ij}>0$. 
This orientation of $\Gamma$ is denoted by~$\Omega$.

This paper continues a project \cite{cambrian,camb_fan,typefree} of attempting to understand the structure of cluster algebras by looking solely at the combinatorial data $(W, \Gamma, \Omega)$.
In the previous papers, it was necessary to assume that~$\Omega$ was acyclic. 
This assumption is no restriction when $\Gamma$ is a tree---in particular, whenever~$W$ is finite.
In general, however, many of the most interesting and least tractable cluster algebras correspond to orientations with cycles.
Methods based on quiver theory, which have proved so powerful in the investigation of cluster algebras, were originally also inapplicable in the case of cycles; recent work of Derksen, Weyman and Zelevinsky \cite{QWP1} has partially improved this situation.

The aim of this note is to extend the combinatorial results of~\cite{typefree} to the case of an orientation with cycles. 
This paper does not treat cluster algebras at all, but proves combinatorial results which will be applied to cluster algebras in a future paper.
The results can be understood independently of cluster algebras and of the previous papers.
The arguments are valid not only for the Coxeter groups that arise from cluster algebras, but for Coxeter groups in full generality.
In this sense, the title of the paper is narrower than the subject matter, but we have chosen the narrow title as a briefer alternative to a title such as ``Sortable elements for non-acyclic orientations of the Coxeter diagram.''

Let $S$ be the set of simple generators of~$W$, i.e.\ the vertex set of $\Gamma$.
If~$\Omega$ is acyclic, then we can order the elements of $S$ as $s_1,s_2,\ldots,s_n$ so that, if there is an edge $s_i \from s_j$, then $i < j$.  
The product $c(\Omega)=s_1s_2\cdots s_n$ is called a \newword{Coxeter element} of~$W$.
Although~$\Omega$ may not uniquely determine the total order $s_1,s_2,\ldots,s_n$, the Coxeter element $c(\Omega)$ depends only on $\Omega$.  
Indeed, Coxeter elements of~$W$ are in bijection with acyclic orientations of~$\Gamma$.  

Given a Coxeter element $c$, every element~$w$ of~$W$ has a special reduced word called the \newword{$c$-sorting word} of~$w$. 
The \newword{$c$-sortable elements} of \cite{sortable,sort_camb,camb_fan,typefree} are the elements of~$W$ whose $c$-sorting word has a certain special property.
We review the definition in Section~\ref{main}.
Sortable elements provide a natural scaffolding on which to construct cluster algebras \cite{camb_fan,clusters}.
The goal of this paper is to provide a definition of $\Omega$-sortable elements for arbitrary orientations which have the same elegant properties as in the acyclic case (always keeping in mind the underlying goals related to cluster algebras).

Say that a subset~$J$ of $S$ is \newword{$\Omega$-acyclic} if the induced subgraph of $\Gamma$ with vertex set~$J$ is acyclic. 
If~$J$ is $\Omega$-acyclic, then the restriction $\Omega|_J$ defines a Coxeter element $c(\Omega,J)$ for the standard parabolic subgroup $W_J$. 
(Here $W_J$ is the subgroup of~$W$ generated by~$J$.) 
We define~$w$ to be $\Omega$-sortable if there is some $\Omega$-acylic set~$J$ such that~$w$ lies in $W_J$ and~$w$ is $c(\Omega,J)$-sortable, when considered as an element of $W_J$.
The definition appears artificial at first, but in Section~\ref{main} we present an equivalent, more elegant definition of $\Omega$-sortability which avoids referencing the definition from the acyclic case.

When $J$ is $\Omega$-acyclic, we will often regard $\Omega|_J$ as a poset.
Here the order relation, written $\le_J$, is the transitive closure of the relation with $r >_J s$ if there is an edge $r \to s$. 

We now summarize the properties of $\Omega$-sortable elements for general $\Omega$.  
All of these properties are generalizations of results on the acyclic case which were proved in~\cite{typefree}.
As in the acyclic case, we start with a recursively defined downward projection map $\pidown^{\Omega}:W\to W$.
(The definition is given in Section~\ref{main}.)
We then prove the following property of $\pidown^\Omega$.

\begin{prop} \label{Maximal Sortable}
Let $w\in W$.
Then $\pidown^{\Omega}(w)$ is the unique maximal (under weak order) $\Omega$-sortable element weakly below~$w$.
\end{prop}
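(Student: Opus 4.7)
The plan is to induct on $\ell(w)$, using the recursive definition of $\pidown^\Omega$ promised in Section~\ref{main} and mirroring the strategy in the acyclic case treated in~\cite{typefree}. Since every $\Omega$-sortable element lives in $W_J$ for some $\Omega$-acyclic $J \subseteq S$ and is $c(\Omega,J)$-sortable there, the main task is to track how the recursion for $\pidown^\Omega(w)$ produces, and detects, such a witnessing set $J$. One expects the recursion to isolate a distinguished simple reflection $s$---in the acyclic case the minimum of the total order read off from $c$, and here presumably a source of (a suitable restriction of) $\Omega$---and to branch on whether $s \in \Des_L(w)$.

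First I would show that $\pidown^\Omega(w) \le w$ in weak order and that it is $\Omega$-sortable. In the descent branch, the inductive hypothesis applied to $sw$ and the appropriately modified orientation $\Omega'$ yields an $\Omega'$-sortable element $u$ below $sw$; prepending $s$ gives $su \le w$, and the witnessing acyclic set is $\{s\} \cup J'$, where $J'$ witnesses $u$. Acyclicity of the enlarged set is preserved because $s$ was chosen as a source of $\Omega$, so no arrow of $\Omega$ points into $s$ from within $J'$. In the non-descent branch, one first replaces $w$ by its projection into $W_{S \setminus \{s\}}$ (the weakly-largest element of that parabolic below $w$), then recurses on a strictly smaller parabolic; the witnessing set is inherited directly from the recursive call.

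Second, for maximality and uniqueness, let $v \le w$ be $\Omega$-sortable with witness $J$. If $s \in \Des_L(w)$ and $s \in J$, then $s$ is the source of $\Omega|_J$ that initiates the $c(\Omega,J)$-sorting word of $v$, so $s \in \Des_L(v)$, and $sv \le sw$ remains sortable for the shifted orientation; induction on $\ell$ then concludes. If $s \in \Des_L(w)$ but $s \notin J$, then $v \in W_{S \setminus \{s\}}$, and one reduces to the other branch by showing $v$ lies below the parabolic projection of $w$. If $s \notin \Des_L(w)$, then $s \notin \Des_L(v)$ either, so again $v \in W_{S \setminus \{s\}}$ and $v$ lies below the projection, and induction applies. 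Uniqueness of the maximal element follows immediately, since $\pidown^\Omega(w)$ is itself $\Omega$-sortable and weakly below $w$.

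The main obstacle is maintaining control of the recursion when $\Omega$ has cycles: removing a source $s$ from $\Omega$ may leave further cycles, and one must verify that the witnessing set $J$ for $v$ interacts correctly with the chosen $s$ across the descent/non-descent dichotomy. This is the point at which the \emph{subtle combinatorial property of the weak order} flagged in the abstract should enter. Concretely, the ingredient I expect to need is a lemma---valid for arbitrary orientations---saying that if $v \le w$ and some simple reflection $s$ lies in $\Des_L(w) \setminus \Des_L(v)$, then $v$ is bounded above by the largest element of $W_{S \setminus \{s\}}$ weakly below $w$. Granted that lemma, together with the acyclic case of~\cite{typefree} as the terminal step of the induction, the argument above closes cleanly.
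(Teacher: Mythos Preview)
There is a genuine gap, located exactly where you flag the obstacle. Your proposed recursion isolates a single source $s$ of $\Omega$ and branches on whether $s\le w$, but when $\Omega$ has a directed cycle through every vertex (e.g.\ a cyclically oriented triangle) there is no source at all, and the recursion never starts. The paper's definition of $\pidown^\Omega$ is not of this shape: it sets $J := J(w,\Omega)$ --- the unique inclusion-maximal $\Omega$-acyclic subset of $S$ with $c(\Omega,J) \leq w$ --- and defines $\pidown^\Omega(w) = c(\Omega,J)\,\pidown^\Omega\!\bigl((c(\Omega,J))^{-1} w_J\bigr)$, stripping off an entire parabolic Coxeter element rather than one letter. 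The existence and uniqueness of $J(w,\Omega)$ is Proposition~\ref{Maximal Coxeter}, and \emph{this} is the ``subtle combinatorial property'' alluded to in the abstract: it is deduced from the antimatroid structure of $L(w,\Omega)$ (Theorem~\ref{antimatroid}), whose proof occupies most of Section~\ref{background}. Your sketch never invokes this result, and without it there is no uniform way to choose the branching datum.

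With $J(w,\Omega)$ available, the paper's argument is not an induction on $\ell(w)$ but a one-step reduction to the acyclic case. Writing $J=J(w,\Omega)$ and $c = c(\Omega,J)$, one has $\pidown^\Omega(w) = \pidown^{c}(w_J)$, which is $c$-sortable (hence $\Omega$-sortable) and lies below $w_J \leq w$ by the acyclic theory. For maximality, if $v \leq w$ is $\Omega$-sortable then $w \geq v \geq c(\Omega, J(v,\Omega))$ forces $J(v,\Omega) \subseteq J$ by the maximality of $J$; thus $v \in W_J$, whence $v = v_J \leq w_J$, and the acyclic order-preserving property of $\pidown^c$ gives $v = \pidown^c(v) \leq \pidown^c(w_J)$. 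Note also that the auxiliary lemma you propose at the end is false as stated: in $S_3$ with $w = s_1 s_2 s_1$, $v = s_2 s_1$, and $s = s_1$, one has $s\le w$ and $s\not\le v$, but $v \not\leq w_{\{s_2\}} = s_2$. In the places you actually apply it you have the extra hypothesis $v\in W_{S\setminus\{s\}}$, which does give the conclusion --- but securing that hypothesis uniformly again comes down to controlling $J(v,\Omega)$ relative to $J(w,\Omega)$, i.e.\ to Proposition~\ref{Maximal Coxeter}.
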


As immediate corollaries of Proposition~\ref{Maximal Sortable}, we have the following results.

\begin{theorem}\label{o p}
The map $\pidown^{\Omega}$ is order-preserving.
\end{theorem}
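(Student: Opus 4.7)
The plan is to deduce Theorem~\ref{o p} directly from Proposition~\ref{Maximal Sortable}, treating it as a one-line corollary rather than going back to the recursive definition of $\pidown^\Omega$. Given $v \le w$ in the weak order, the goal is to show $\pidown^\Omega(v) \le \pidown^\Omega(w)$.

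The key observation is that weak order is transitive, so everything weakly below $v$ is automatically weakly below $w$. By Proposition~\ref{Maximal Sortable}, $\pidown^\Omega(v)$ is an $\Omega$-sortable element with $\pidown^\Omega(v) \le v$; combining with $v \le w$ gives $\pidown^\Omega(v) \le w$. Thus $\pidown^\Omega(v)$ is an $\Omega$-sortable element weakly below $w$. Proposition~\ref{Maximal Sortable} applied to $w$ asserts that $\pidown^\Omega(w)$ is the unique \emph{maximal} such element, so $\pidown^\Omega(v) \le \pidown^\Omega(w)$, as desired.

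There is essentially no obstacle here beyond invoking the previous proposition; the entire content has been packaged into Proposition~\ref{Maximal Sortable}. All the real work (checking that the recursively defined $\pidown^\Omega(w)$ is $\Omega$-sortable, that it lies below $w$, and that it dominates every other $\Omega$-sortable element below $w$) will be absorbed into the proof of that proposition. Once those three assertions are available, order-preservation is a formal consequence of the uniqueness of a maximum in a poset.
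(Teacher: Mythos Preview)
Your proof is correct and matches the paper's approach exactly: the paper states Theorem~\ref{o p} as an immediate corollary of Proposition~\ref{Maximal Sortable}, and your argument is precisely the standard deduction that a map sending each element to the maximum of a certain downward-closed set is order-preserving.
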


\begin{prop}\label{idem} 
The map $\pidown^{\Omega}$ is idempotent (i.e. $\pidown^\Omega\circ\pidown^\Omega=\pidown^\Omega$).
\end{prop}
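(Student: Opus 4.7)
The plan is to derive idempotence as a direct consequence of Proposition \ref{Maximal Sortable}, exactly as the text signals by calling it an ``immediate corollary.'' The argument should fit in a few lines.

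Fix $w \in W$ and set $v := \pidown^\Omega(w)$. First I invoke Proposition \ref{Maximal Sortable} applied to $w$: this tells us two things that I will use, namely that $v$ is itself $\Omega$-sortable, and that $v$ lies weakly below $w$. The $\Omega$-sortability of $v$ is the crucial output here; the weak order relation $v \le w$ is not actually needed for idempotence, but it is packaged into the same statement.

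Next I apply Proposition \ref{Maximal Sortable} again, this time to $v$ in place of $w$. It says that $\pidown^\Omega(v)$ is the unique maximal $\Omega$-sortable element weakly below $v$. Since $v \le v$ in weak order and $v$ is $\Omega$-sortable (by the previous paragraph), $v$ is itself an $\Omega$-sortable element weakly below $v$, and it is obviously the maximal such element. By the uniqueness asserted in Proposition \ref{Maximal Sortable}, we conclude $\pidown^\Omega(v) = v$, i.e.\ $\pidown^\Omega(\pidown^\Omega(w)) = \pidown^\Omega(w)$. Since $w$ was arbitrary, this gives $\pidown^\Omega \circ \pidown^\Omega = \pidown^\Omega$.

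There is essentially no obstacle in this proof, so long as Proposition \ref{Maximal Sortable} is in hand; the entire content of idempotence has been loaded into that earlier statement. The only thing to double-check when writing it up is that Proposition \ref{Maximal Sortable} really does assert both the $\Omega$-sortability of $\pidown^\Omega(w)$ and its uniqueness as a maximal $\Omega$-sortable element below $w$---both of which are built into the phrase ``unique maximal $\Omega$-sortable element weakly below $w$.''
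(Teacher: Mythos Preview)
Your proof is correct and matches the paper's approach exactly: the paper simply declares Proposition~\ref{idem} an immediate corollary of Proposition~\ref{Maximal Sortable}, and your argument is precisely the unpacking of that implication.
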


\begin{prop} \label{downward}
Let $w \in W$. 
Then $\pidown^{\Omega}(w) \leq w$, with equality if and only if~$w$ is $\Omega$-sortable. 
\end{prop}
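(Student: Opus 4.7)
The plan is to derive this proposition as an essentially immediate consequence of Proposition~\ref{Maximal Sortable}, interpreting ``weakly below $w$'' as the weak order relation $\le w$. The content of Proposition~\ref{Maximal Sortable} already bundles three facts: that $\pidown^\Omega(w)$ is $\Omega$-sortable, that it satisfies $\pidown^\Omega(w)\le w$, and that it is maximal (indeed unique-maximal) with these two properties among elements of~$W$. Each part of Proposition~\ref{downward} reduces to unpacking one of these.

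First, the inequality $\pidown^\Omega(w)\le w$ is literally the ``weakly below $w$'' clause in Proposition~\ref{Maximal Sortable}, so no further work is needed. For the equivalence, I argue each direction separately. If $\pidown^\Omega(w)=w$, then Proposition~\ref{Maximal Sortable} asserts that $\pidown^\Omega(w)$ is $\Omega$-sortable, so $w$ itself is $\Omega$-sortable. Conversely, suppose $w$ is $\Omega$-sortable. Then $w$ is an $\Omega$-sortable element satisfying $w\le w$, so $w$ is itself a candidate for the maximum in Proposition~\ref{Maximal Sortable}. Since $\pidown^\Omega(w)$ is characterized as the unique maximal $\Omega$-sortable element weakly below $w$, and $w$ is such an element, maximality forces $\pidown^\Omega(w)\ge w$; combined with the inequality already established, this gives $\pidown^\Omega(w)=w$. (Alternatively, uniqueness of the maximum at value~$w$ itself directly yields $\pidown^\Omega(w)=w$.)

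Since the argument is purely formal, there is no real obstacle to overcome; the only subtlety to watch for is to make sure we are invoking both halves of the statement in Proposition~\ref{Maximal Sortable}---both the existence/$\Omega$-sortability of $\pidown^\Omega(w)$ and its maximality---rather than secretly assuming what we want to prove. No independent properties of the recursive definition of $\pidown^\Omega$ need to be revisited at this stage: once Proposition~\ref{Maximal Sortable} is in hand, Proposition~\ref{downward} follows in a few lines.
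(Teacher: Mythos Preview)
Your proposal is correct and matches the paper's approach exactly: the paper states Proposition~\ref{downward} as an immediate corollary of Proposition~\ref{Maximal Sortable} without giving a separate proof, and your argument is precisely the unpacking one would write out if asked to justify that claim.
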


We also establish the lattice-theoretic properties of $\Omega$-sortable elements and of the map $\pidown^\Omega$. 

 \begin{theorem} \label{Sublattice}
 If $A$ is a nonempty set of $\Omega$-sortable elements then $\Meet A$ is $\Omega$-sortable.
 If $A$ is a set of $\Omega$-sortable elements such that $\Join A$ exists, then $\Join A$ is $\Omega$-sortable.
 \end{theorem}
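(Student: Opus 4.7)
\emph{Joins.} The join statement follows directly from the preceding results. Suppose $v := \bigvee A$ exists. Since $\pidown^\Omega$ is order-preserving by Theorem~\ref{o p} and fixes each $a \in A$ by Proposition~\ref{downward}, we have $\pidown^\Omega(v) \ge \pidown^\Omega(a) = a$ for every $a \in A$, so $\pidown^\Omega(v)$ is an upper bound for $A$. This forces $v \le \pidown^\Omega(v)$; combining with $\pidown^\Omega(v) \le v$ from Proposition~\ref{downward} gives $\pidown^\Omega(v) = v$, and a second application of Proposition~\ref{downward} concludes that $v$ is $\Omega$-sortable.

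\emph{Meets.} The dual computation only yields $\pidown^\Omega(m) \le m$ for $m := \Meet A$, which is insufficient, so a different argument is needed. My plan is to reduce to the already-known acyclic version of the theorem from \cite{typefree} by working inside a carefully chosen parabolic subgroup. For each $a \in A$, use the definition of $\Omega$-sortability to choose an $\Omega$-acyclic set $J_a \subseteq S$ with $a \in W_{J_a}$ and $a$ being $c(\Omega,J_a)$-sortable in $W_{J_a}$. From the standard fact that every reduced word for an element below~$a$ in weak order uses only letters from a reduced word of~$a$, we get $m \in W_{J_a}$ for each $a$, hence $m \in W_J$ where $J := \bigcap_a J_a$. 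The set $J$ is $\Omega$-acyclic as an induced subgraph of $\Omega|_{J_a}$ for any fixed $a$.

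Now project each $a$ into $W_J$: let $a_J$ denote the maximum element of $W_J$ weakly below $a$. The identity $m = \Meet_{W_J} a_J$ is straightforward: on the one hand $m \in W_J$ and $m \le a$ gives $m \le a_J$, while on the other hand any lower bound of $\{a_J\}$ in $W_J$ is a lower bound of $A$ in $W$ and hence $\le m$. If one can verify that each $a_J$ is $c(\Omega,J)$-sortable in $W_J$, then applying the acyclic version of the theorem inside $W_J$ shows $m$ is $c(\Omega,J)$-sortable, and $m$ is $\Omega$-sortable by definition. The main obstacle is precisely this compatibility step: that the parabolic projection $a \mapsto a_J$ carries $c(\Omega,J_a)$-sortable elements of $W_{J_a}$ to $c(\Omega,J)$-sortable elements of $W_J$ whenever $J \subseteq J_a$ is an $\Omega$-acyclic subset. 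This is where I expect the subtle combinatorial property of the weak order alluded to in the abstract to enter, and it will probably require its own technical lemma (or possibly a direct inductive attack on $m$ using the recursive definition of $\pidown^\Omega$ that bypasses the parabolic reduction altogether).
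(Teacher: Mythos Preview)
Your join argument is correct and is genuinely cleaner than the paper's. The paper reduces to the two-element case $\{u,v\}$, sets $I=J(u,\Omega)$ and $J=J(v,\Omega)$, and then works to show that $J(u\join v,\Omega)=I\cup J$ (in particular that $I\cup J$ is $\Omega$-acyclic) so that the acyclic theorem from \cite{typefree} can be applied inside $W_{I\cup J}$. Your argument bypasses all of this parabolic bookkeeping: once Proposition~\ref{Maximal Sortable} and its corollaries are in hand, the three lines you wrote suffice. What the paper's route buys is the side information $J(u\join v,\Omega)=J(u,\Omega)\cup J(v,\Omega)$, but this is not used elsewhere.

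Your meet outline is exactly the paper's approach, and the gap you flag is smaller than you fear. The needed fact---that the parabolic projection of an $\Omega$-sortable element to $W_J$ is $\Omega|_J$-sortable---is Proposition~\ref{Sortable restriction}, and its proof is a one-line reduction to the acyclic version already in \cite{typefree}: if $v$ is $c(\Omega,J_a)$-sortable in $W_{J_a}$, then $v_J=v_{J\cap J_a}$ is $c(\Omega,J\cap J_a)$-sortable by \cite[Proposition~3.13]{typefree}, hence $\Omega|_J$-sortable. This is \emph{not} the ``subtle combinatorial property'' from the abstract; that refers to Proposition~\ref{Maximal Coxeter} / Theorem~\ref{antimatroid}, which you have already implicitly invoked through Theorem~\ref{o p} and Proposition~\ref{downward}. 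One small tightening: rather than picking arbitrary $J_a$, take $J_a=J(a,\Omega)$ so that Proposition~\ref{para hom} gives $\Meet A=(\Meet A)_J=\Meet_{a}a_J$ directly.
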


\begin{theorem} \label{QuotientLattice}
If $A$ is a nonempty subset of~$W$ then $\pidown^{\Omega} \left(\Meet A\right) = \Meet \pidown^{\Omega} A$.
If $A$ is a subset of~$W$ such that $\Join A$ exists, then $\pidown^{\Omega} \left(\Join A\right) = \Join \pidown^{\Omega} A$.
\end{theorem}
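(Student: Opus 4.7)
My plan is to handle the two halves of the theorem separately. The meet identity is essentially formal, whereas the join identity has a formal ``easy direction'' and a ``hard direction'' that is the real content.

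The meet identity is a direct consequence of Proposition~\ref{Maximal Sortable} together with Theorem~\ref{Sublattice}. Set $m=\Meet A$. Theorem~\ref{o p} gives $\pidown^{\Omega}(m)\le\pidown^{\Omega}(a)$ for each $a\in A$, so $\pidown^{\Omega}(m)\le\Meet\pidown^{\Omega}A$. Conversely, $\Meet\pidown^{\Omega}A$ is $\Omega$-sortable by Theorem~\ref{Sublattice}, and since each $\pidown^{\Omega}(a)\le a$ it lies below $m$; Proposition~\ref{Maximal Sortable} then forces $\Meet\pidown^{\Omega}A\le\pidown^{\Omega}(m)$, giving equality.

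For the join identity, assume $\Join A$ exists. Since $\pidown^{\Omega}(a)\le a\le\Join A$ for every $a$, the set $\pidown^{\Omega}A$ is bounded above, so $\Join\pidown^{\Omega}A$ exists (any set bounded above in the weak order of a Coxeter group has a join). By Theorem~\ref{Sublattice} this join is $\Omega$-sortable, and since it lies below $\Join A$, Proposition~\ref{Maximal Sortable} yields
\[
\Join\pidown^{\Omega}A\le\pidown^{\Omega}(\Join A).
\]

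The reverse inequality $\pidown^{\Omega}(\Join A)\le\Join\pidown^{\Omega}A$ is the substantive point and the main obstacle: it does \emph{not} follow from Proposition~\ref{Maximal Sortable} and Theorem~\ref{Sublattice} alone, since a priori the maximal sortable element below $\Join A$ could exceed the join of the maximal sortable elements below the individual $a$'s. My plan is to first establish the binary case $\pidown^{\Omega}(w\join w')=\pidown^{\Omega}(w)\join\pidown^{\Omega}(w')$, extend to arbitrary finite $A$ by induction on $|A|$, and pass to arbitrary $A$ by expressing $\Join A$ as the join of the finite joins $\Join F$ with $F\subseteq A$ finite, then invoking Theorem~\ref{o p} and the already-established one-sided inequality. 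For the binary identity I would induct on the length of $w\join w'$, using the recursive definition of $\pidown^{\Omega}$ from Section~\ref{main}: choose a source $s$ of $\Omega$ that is an initial letter of $w\join w'$, split according to whether $s$ is an initial letter of $w$, of $w'$, of both, or of neither, and in each sub-case peel off $s$ (or pass to a parabolic subgroup in which $s$ is available) and apply the inductive hypothesis. The delicate step, and precisely where the combinatorial property of the weak order highlighted in the abstract will be needed, is to show that such a source-descent of $\Join A$ is always available for the reduction even when $\Omega$ contains cycles; this is exactly the ingredient absent from the acyclic arguments of~\cite{typefree}.
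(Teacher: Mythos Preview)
Your treatment of the meet identity and of the inequality $\Join\pidown^{\Omega}A\le\pidown^{\Omega}(\Join A)$ is correct and essentially matches the paper's approach; these parts are indeed formal consequences of Proposition~\ref{Maximal Sortable}, Theorem~\ref{o p}, and Theorem~\ref{Sublattice}. (Your step from finite to arbitrary $A$ is also unnecessary: if $\Join A$ exists then $A\subseteq[e,\Join A]$, a finite interval.)

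The gap is in your plan for the reverse inequality $\pidown^{\Omega}(\Join A)\le\Join\pidown^{\Omega}A$. You propose to induct on $\ell(w\join w')$ by ``choosing a source $s$ of $\Omega$ that is an initial letter of $w\join w'$.'' But when $\Omega$ contains a cycle through every vertex (e.g.\ the triangle $p\to q\to r\to p$), $\Omega$ has no source at all, so the induction cannot even begin. You flag this as ``the delicate step,'' but you do not indicate how to resolve it, and the recursive definition of $\pidown^\Omega$ in Section~\ref{main} does not peel off a single generator: it peels off the entire word $c(\Omega,J(w,\Omega))$ and simultaneously projects to the parabolic $W_{J(w,\Omega)}$. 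There is no evident way to run a one-letter-at-a-time induction against this recursion.

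The paper bypasses induction entirely. It sets $J=J(\Join A,\Omega)$, which is $\Omega$-acyclic by construction, and uses the identity $\pidown^{\Omega}(x)=\pidown^{c(\Omega,J(x,\Omega))}(x_{J(x,\Omega)})$ together with Proposition~\ref{para hom} to rewrite $\pidown^{\Omega}(\Join A)$ as $\pidown^{\Omega|_J}(\Join A_J)$. One then invokes the acyclic case of the theorem (already proved in~\cite{typefree}) to get $\Join\pidown^{\Omega|_J}(A_J)$. The point that closes the argument is that $J(a,\Omega)\subseteq J$ for every $a\in A$ (since $a\le\Join A$ and $J(\cdot,\Omega)$ is monotone by Proposition~\ref{Maximal Coxeter}), whence $\pidown^{\Omega|_J}(a_J)=\pidown^{\Omega}(a)$. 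This reduction-to-acyclic via the parabolic $W_J$ is the missing idea in your proposal.
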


None of these results are trivial consequences of the definitions; the proofs are nontrivial reductions to the acyclic case.
Our proofs rely on the following key combinatorial result. 

\begin{prop} \label{Maximal Coxeter}
Let~$w$ be an element of~$W$ and~$\Omega$ an orientation of $\Gamma$. 
Then there is an $\Omega$-acyclic subset $J(w,\Omega)$ of $S$ which is maximal (under inclusion) among those $\Omega$-acyclic subsets $J'$ of $S$ having the property that $w \geq c(\Omega,J')$.
\end{prop}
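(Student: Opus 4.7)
The approach I would take is a direct finiteness argument. The plan is to define the family
\[
\mathcal{F}(w,\Omega) := \{J \subseteq S : J \text{ is } \Omega\text{-acyclic and } w \geq c(\Omega, J)\},
\]
verify that it is nonempty by exhibiting $\emptyset$ as a member (using the fact that $c(\Omega,\emptyset)$ is the empty product, hence the identity of $W$, which is weakly below $w$), and then invoke finiteness of $S$ to extract a maximal element of $\mathcal{F}(w,\Omega)$ under inclusion. Setting $J(w,\Omega)$ to be any such maximal element would complete the proof.

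The reason for the brevity at this step is that the proposition claims only existence of a maximal member, not uniqueness: easy examples with $\Omega$ restricting to a directed $3$-cycle on some $3$-element subset of $S$ show that distinct maximal $J$'s can coexist inside $\mathcal{F}(w,\Omega)$, so no exchange, induction, or lattice argument is needed to produce one. The entire content of the proposition is that at least one such $J$ exists.

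The real work I would expect lies further downstream, not here: the proofs of Propositions~\ref{Maximal Sortable} through~\ref{QuotientLattice} will rely not just on the bare existence of $J(w,\Omega)$, but on compatibility between a judicious choice of $J(w,\Omega)$ and the recursive construction of $\pidown^{\Omega}$ given in Section~\ref{main}. The hard part of the overall program will be establishing such compatibility, so that restricting to the parabolic subgroup $W_{J(w,\Omega)}$ with its \emph{acyclic} restricted orientation $\Omega|_{J(w,\Omega)}$ turns each assertion about $\pidown^{\Omega}$ on $W$ into a corresponding assertion about the projection for $c(\Omega, J(w,\Omega))$ on $W_{J(w,\Omega)}$, where the results of~\cite{typefree} apply.
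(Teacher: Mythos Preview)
Your argument establishes the existence of \emph{some} maximal element of $\mathcal{F}(w,\Omega)$, which is indeed a triviality from the finiteness of $S$. But this is not what the proposition asserts. The notation $J(w,\Omega)$ names a specific set, and the entire Section~\ref{main} (the definition of the $\Omega$-sorting word, the definition of $\pidown^\Omega$, Propositions~\ref{SameAsAcyclic} and~\ref{ParaAndAcyclic}, the proof of Proposition~\ref{Maximal Sortable}) treats $J(w,\Omega)$ as a well-defined function of $w$ and $\Omega$. The substantive content of Proposition~\ref{Maximal Coxeter} is that $\mathcal{F}(w,\Omega)$ has a \emph{unique} maximal element, and that is what the paper proves.

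Your claim that ``easy examples with $\Omega$ restricting to a directed $3$-cycle \ldots\ show that distinct maximal $J$'s can coexist'' is false. This is precisely the content of Theorem~\ref{antimatroid}: for every $w$ and every $\Omega$, the family $L(w,\Omega)=\mathcal{F}(w,\Omega)$ is an antimatroid, and hence (Proposition~\ref{anti max}) has a unique maximal element. The paper devotes the bulk of Section~\ref{background} to this, and the argument is not short: Lemmas~\ref{tea room} through~\ref{No Chains} are needed to verify the antimatroid axioms, with the crucial step (condition $(3')$) being that if $I$, $I\cup\{a\}$, and $I\cup\{a'\}$ all lie in $L(w,\Omega)$ then $I\cup\{a,a'\}$ must be $\Omega$-acyclic. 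In your suggested $3$-cycle example, this says that one cannot have $w\ge c(\Omega,\{s_i,s_j\})$ for two distinct edges of the cycle simultaneously; Lemma~\ref{No Chains} is exactly what rules this out. So the ``hard part'' you anticipate downstream is in fact located here, and your proposal skips it.
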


We prove Proposition~\ref{Maximal Coxeter} by establishing a stronger result, which we find interesting in its own right.
Let $L(w,\Omega)$ be the collection of subsets~$J$ of $S$ such that~$J$ is $\Omega$-acyclic and $c(\Omega,J) \leq w$.

\begin{theorem}\label{antimatroid}
For any orientation~$\Omega$ of $\Gamma$ and any $w\in W$, the collection $L(w,\Omega)$ is an antimatroid.
\end{theorem}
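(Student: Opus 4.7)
The plan is to verify the three defining axioms of an antimatroid: $\emptyset \in L(w, \Omega)$, accessibility, and closure under union. The first is immediate since $c(\Omega, \emptyset) = e \leq w$. For accessibility, given nonempty $J \in L(w, \Omega)$, let $s$ be a source of $\Omega|_J$ (which exists since $\Omega|_J$ is acyclic). Because the sources of $\Omega|_J$ are exactly the right descents of $c(\Omega, J)$, the expression $c(\Omega, J) = c(\Omega, J \setminus \{s\}) \cdot s$ is reduced, and so $c(\Omega, J \setminus \{s\}) \leq c(\Omega, J) \leq w$ in right weak order; since $\Omega|_{J \setminus \{s\}}$ remains acyclic, $J \setminus \{s\} \in L(w, \Omega)$.

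The substantive axiom is closure under union, which I plan to prove by induction on $\ell(w)$, with $\ell(w) = 0$ trivial. For the inductive step, I would rely on two observations. First, if $J \in L(w, \Omega)$ and $s$ is a sink of $\Omega|_J$, then $s$ is a left descent of $c(\Omega, J)$, and since $c(\Omega, J) \leq w$ in right weak order, $s$ is necessarily also a left descent of $w$. Second, in this situation $c(\Omega, J) = s \cdot c(\Omega, J \setminus \{s\})$ is reduced, and standard cancellation of the common left descent $s$ yields $c(\Omega, J) \leq w \iff c(\Omega, J \setminus \{s\}) \leq sw$. Thus removing a sink of $\Omega|_J$ converts $L(w, \Omega)$-membership into $L(sw, \Omega)$-membership, with $\ell(sw) = \ell(w) - 1$, opening the door to induction.

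Given $A, B \in L(w, \Omega)$, the strategy is as follows: first show that $A \cup B$ is $\Omega$-acyclic; then pick a sink $s$ of $\Omega|_{A \cup B}$. By the first observation applied to whichever of $A, B$ contains $s$, this $s$ is a left descent of $w$. Applying the cancellation fact to each of $A$, $B$, and $A \cup B$ (as appropriate) reduces the problem to showing $(A \cup B) \setminus \{s\} \in L(sw, \Omega)$, which should follow from the inductive hypothesis applied to $A \setminus \{s\}$ and $B \setminus \{s\}$ viewed in $L(sw, \Omega)$. Lifting the resulting containment back through cancellation gives $A \cup B \in L(w, \Omega)$.

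Two significant obstacles block this plan. The first is the $\Omega$-acyclicity of $A \cup B$: unions of $\Omega$-acyclic subsets of $S$ are not generally $\Omega$-acyclic (witness a directed 3-cycle, any two of whose edges form an acyclic pair), so the hypotheses $c(\Omega, A), c(\Omega, B) \leq w$ must conspire to forbid cycle-completing configurations. Establishing this requires comparing the reduced words for $c(\Omega, A)$ and $c(\Omega, B)$ as prefixes of a single reduced word for $w$, likely via careful analysis of commutation classes. The second obstacle arises in the reduction when $s$ lies in only one of $A, B$, say $s \in A \setminus B$: while cancellation gives $A \setminus \{s\} \in L(sw, \Omega)$ directly, the statement $B \in L(sw, \Omega)$ is not automatic from $c(\Omega, B) \leq w$, and one would need to argue that, since $s$ is a sink of $\Omega|_{A \cup B}$ and $s \notin B$, the reduced word for $w$ can be rearranged so that $s$ does not interfere with the prefix $c(\Omega, B)$. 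Resolving these two obstacles—which is presumably where the authors deploy the ``subtle combinatorial property of the weak order'' advertised in the abstract—constitutes the technical heart of the proof.
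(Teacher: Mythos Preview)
Your framework is valid—accessibility plus closure under union is an equivalent axiomatization of antimatroids, and your accessibility argument is correct. But your route to union closure differs from the paper's, and, as you anticipate, the substantive content lies entirely in your two obstacles.

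The paper does not attack union closure head-on. Instead it verifies an alternative characterization (Lemma~\ref{BEZ Lemma}): a graded-chain condition~$(2')$ together with the ``diamond'' condition~$(3')$ that if $I$, $I\cup\{a\}$, $I\cup\{a'\}\in L$ then $I\cup\{a,a'\}\in L$. This reduction is not cosmetic: it means acyclicity---your obstacle~(a)---need only be proved for $I\cup\{a,a'\}$ rather than for an arbitrary union $A\cup B$. Even in that special case, the argument is geometric rather than word-combinatorial: explicit formulas for roots (Lemmas~\ref{Inversion Formula} and~\ref{coeff at least 1}) show that if oriented paths ran both ways between $a$ and $a'$ through $I$, then two particular inversions of $c(\Omega,I\cup\{a\})$ and $c(\Omega,I\cup\{a'\})$ would have $K(\beta_t,\beta_u)\le -2$ and hence generate an infinite dihedral subgroup (Lemma~\ref{No Chains}), so by Lemma~\ref{PilkLemma} no $w$ could lie above both. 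Your suggested ``careful analysis of commutation classes'' does not point toward this; the argument genuinely lives in the root system.

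Your obstacle~(b), by contrast, is essentially the paper's Lemma~\ref{tea room} and yields to the same idea: since $s\notin B$, every inversion $t$ of $c(\Omega,B)$ has $\beta_t$ supported on $\{\alpha_b:b\in B\}$, so $K(\alpha_s^\vee,\beta_t)\le 0$ and $\beta_{sts}$ lies in the positive span of $\beta_s$ and $\beta_t$; Lemma~\ref{PilkLemma} applied to $\inv(sw)$ then forces $t\in\inv(sw)$. With this in hand and acyclicity granted, your induction on $\ell(w)$ does go through. But note the ordering constraint: acyclicity of $A\cup B$ must be established \emph{before} you can choose a sink of $\Omega|_{A\cup B}$, so it cannot be folded into the same induction; it must be proved separately, and for arbitrary $A,B\in L$ rather than the one-element-difference case the paper isolates. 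That is precisely what the passage through condition~$(3')$ buys.
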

We review the definition of antimatroid in Section~\ref{background}.
By a well-known result (Proposition~\ref{anti max}) on antimatroids, Theorem~\ref{antimatroid} implies Proposition~\ref{Maximal Coxeter}.

A key theorem of~\cite{typefree} is a very explicit geometric description of the fibers of $\pidown^c$ (the acyclic version of $\pidown^{\Omega}$). 
To each $c$-sortable element is associated a pointed simplicial cone $\Cone_c(v)$, and it is shown  \cite[Theorem~6.3]{typefree} that $\pidown^c(w)=v$ if and only if $wD$ lies in $\Cone_c(v)$, where $D$ is the dominant chamber.
The cones $\Cone_c(v)$ are defined explicitly by specifying their facet-defining hyperplanes.
The geometry of the cones $\Cone_c(v)$ is intimately related with the combinatorics of the associated cluster algebra.
(This connection is made in depth in~\cite{clusters}.)
In this paper, we generalize this polyhedral description to the fibers of $\pidown^\Omega$, when~$\Omega$ may have cycles.
We will see that this polyhedral description, while not incompatible with the construction of cluster algebras, is nevertheless incomplete for the purposes of constructing cluster algebras.

We conclude this introduction by mentioning a negative result.
In \cite[Theorem~4.3]{typefree} (cf. \cite[Theorem~4.1]{sortable}), $c$-sortable elements (and their $c$-sorting words) are characterized by a ``pattern avoidance'' condition given by a skew-symmetric bilinear form. 
Generalizing these pattern avoidance results has proved difficult.
In particular, the verbatim generalization fails, as we show in Section~\ref{Alignment sec}. 

The paper proceeds as follows.
In Section~\ref{background}, we establish additional terminology and definitions, prove Theorem~\ref{antimatroid}, and explain how Theorem~\ref{antimatroid} implies Proposition~\ref{Maximal Coxeter}.
In Section~\ref{main}, we give the definitions of $c$-sortability and $\Omega$-sortability, and prove Proposition~\ref{Maximal Sortable} and Theorems~\ref{Sublattice} and~\ref{QuotientLattice}. 
Section~\ref{sec fibers} presents the polyhedral description of the fibers of $\pidown^{\Omega}$.
In Section~\ref{Alignment sec}, we discuss the issues surrounding the characterization of $\Omega$-sortable elements by pattern avoidance.

In writing this paper, we have had to make a number of arbitrary choices of sign convention.
Our choices are completely consistent with our sign conventions from~\cite{typefree} and are as compatible as possible with the existing sign conventions in the cluster algebra and quiver representation literature. 
Our bijection between Coxeter elements and acyclic orientations of $\Gamma$ is the standard one in the quiver literature, but is opposite to the convention of the first author in~\cite{sortable}.
We summarize our choices in Table~\ref{Conventions}.

\renewcommand{\arraystretch}{1.25}
\begin{table}[h]
\begin{center}
\begin{tabular}{| l l  l |}
\hline
\multicolumn{3}{| l |}{For $i \neq j$ in $[n]$, the following are equivalent:} \\
\quad \quad & There is an edge of $\Gamma$ oriented & $s_i \from s_j$. \\
\quad \quad  & The $B$-matrix of the corresponding cluster algebra has & $B_{ij} = - A_{ij} > 0$. \\ \hline
\multicolumn{3}{| l |}{If $J \subseteq [n]$ is $\Omega$-acyclic and $i \neq j$ are in $J$, the following are equivalent:} \\
\quad \quad & There is an oriented path in $J$ of the form & $i \from \cdots \from j$. \\
\quad \quad & In the poset $\Omega|_J$, we have & $i <_J j$. \\
\quad \quad & All reduced words for $c(\Omega, J)$ are of the form & $\cdots s_i \cdots s_j \cdots$. \\ \hline
\end{tabular}
\end{center}
\smallskip
\caption{Sign Conventions} \label{Conventions}
\end{table}

\section{Coxeter groups and antimatroids} \label{background}
We assume the definition of a Coxeter group~$W$ and the most basic combinatorial facts about Coxeter groups.
Appropriate references are \cite{BjBr,Bourbaki,Humphreys}.
For a treatment that is very well aligned with the goals of this paper, see \cite[Section~2]{typefree}.
The symbol $S$ will represent the set of defining generators or \newword{simple generators} of~$W$.
For each $s,t\in S$, let $m_(s,t)$ denote the integer (or $\infty$) such that $(st)^{m(s,t)} = e$.  
The Coxeter diagram $\Gamma$ of~$W$ was defined in Section~\ref{intro}.
We note here that, for $s,t\in S$, there is an edge connecting $s$ and $t$ in $\Gamma$ if and only if $s$ and $t$ fail to commute.
(The usual edge labels on $\Gamma$, which were not described in Section~\ref{intro}, are not necessary in this paper.)
For $w \in W$, the \newword{length} of~$w$, denoted $\ell(w)$, is the length of the shortest expression for~$w$ in the simple generators.
An expression which achieves this minimal length is called \newword{reduced}.

The \newword{(right) weak order} on~$W$ sets $u \leq w$ if and only if $\ell(u) + \ell(u^{-1} w) = \ell(w)$. 
Thus $u\le w$ if there exists a reduced word for $w$ having, as a prefix, a reduced word for $u$.
Conversely, if $u\le w$ then any given reduced word for $u$ is a prefix of some reduced word for $w$. 
For any $J\subseteq S$, the standard parabolic subgroup $W_J$ is a (lower) order ideal in the weak order on~$W$.
(This follows, for example, from the prefix characterization of weak order and \cite[Corollary~1.4.8(ii)]{BjBr}.)

We need another characterization of the weak order.
We write $T$ for the \newword{reflections} of~$W$.
An \newword{inversion} of $w\in W$ is a reflection $t\in T$ such that $\ell(tw)<\ell(w)$.
Write $\inv(w)$ for the set of inversions of~$w$.
If $a_1\cdots a_k$ is a reduced word for~$w$ then 
\[\inv(w)=\set{a_1, \ a_1a_2a_2, \ \ldots, \ a_1a_2\cdots a_k\cdots a_2a_1},\]
and these $k$ reflections are distinct.  
We will review a geometric characterization of inversions below.
The weak order sets $u\le v$ if and only if $\inv(u)\subseteq\inv(v)$.
As an easy consequence of this characterization of the weak order (see, for example, \cite[Section~2.5]{typefree}), we have the following lemma.

\begin{lemma} \label{AboveBelowLite}
Let $s \in S$. 
Then the map $w\mapsto sw$ is an isomorphism from the weak order on $\set{w\in W:w\not\ge s}$ to the weak order on $\set{w\in W:w\ge s}$. 
\end{lemma}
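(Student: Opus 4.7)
The plan is to translate everything into inversion sets, using the characterization $u \le v \Leftrightarrow \inv(u) \subseteq \inv(v)$ recalled just above the lemma, together with the easy observation that $w \ge s$ holds if and only if $s \in \inv(w)$. Under this translation, the two posets in the statement become $\{w : s \notin \inv(w)\}$ and $\{w : s \in \inv(w)\}$, each ordered by containment of inversion sets, so the problem reduces to understanding how $\inv$ transforms under left multiplication by $s$.

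First I would check that $w \mapsto sw$ is a well-defined bijection between the two sets. If $s \notin \inv(w)$, then for any reduced word $a_1 \cdots a_k$ for $w$, prepending $s$ yields a reduced word $s a_1 \cdots a_k$ for $sw$; hence $\ell(sw)=\ell(w)+1$, and $s$ is a prefix of a reduced word for $sw$, so $s \in \inv(sw)$. Thus the map sends $\{w : w \not\ge s\}$ into $\{w : w \ge s\}$. Since $w \mapsto sw$ is an involution on $W$, running the same argument in reverse shows it sends $\{w : w \ge s\}$ into $\{w : w \not\ge s\}$, producing a bijection.

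The key step is the inversion formula. Using the reduced word $s a_1 \cdots a_k$ for $sw$ together with the explicit formula for $\inv$ recalled in the text, one reads off
\[ \inv(sw) \;=\; \{s\} \cup s\, \inv(w)\, s \qquad \text{whenever } w \not\ge s. \]
Since conjugation by $s$ is a bijection of $T$ with itself, the containment $\inv(u) \subseteq \inv(v)$ between elements $u,v \not\ge s$ is equivalent to $s\,\inv(u)\,s \subseteq s\,\inv(v)\,s$; adjoining the common element $s$ to both sides, this is in turn equivalent to $\inv(su) \subseteq \inv(sv)$. Translating back through the inversion characterization of weak order, this says exactly $u \le v \Leftrightarrow su \le sv$, so the bijection of the previous paragraph is an order isomorphism.

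There is no genuine obstacle here beyond careful bookkeeping; the proof is essentially a direct application of the reduced-word and inversion machinery already reviewed. The only point that requires attention is stating the inversion identity $\inv(sw) = \{s\} \cup s\,\inv(w)\,s$ cleanly and matching conventions with the (right) weak order used in the paper, after which the order-preservation follows mechanically.
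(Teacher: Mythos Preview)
Your proof is correct and follows exactly the approach the paper indicates: the paper does not spell out a proof at all, merely remarking that the lemma is ``an easy consequence'' of the inversion-set characterization of weak order (with a reference to \cite[Section~2.5]{typefree}), and your argument is precisely the natural way to carry this out. The key identity $\inv(sw)=\{s\}\cup s\,\inv(w)\,s$ for $w\not\ge s$ is read off directly from the reduced-word description of inversions given just before the lemma, so you are using only the tools the paper has set up.
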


The weak order is a meet semilattice, meaning that any nonempty set $A\subseteq W$ has a meet.  
Furthermore, if a set $A$ has an upper bound in the weak order, then it has a join.

Given $w\in W$ and $J\subseteq S$, there is a map $w\mapsto w_J$ from~$W$ to $W_J$, defined by the property that $\inv(w_J)=\inv(w)\cap W_J$. 
(See, for example \cite[Section~2.4]{typefree}.)
For $A\subseteq W$ and $J\subseteq S$, let $A_J=\set{w_J:w\in A}$. 
The following is a result of Jedli\v{c}ka~\cite{Jed}.

\begin{prop}\label{para hom} 
For any $J\subseteq S$ and any subset $A$ of~$W$, if $A$ is nonempty then $\Meet (A_J)=\left( \Meet A \right)_J$ and, if $\Join A$ exists, then $\Join (A_J)$ exists and equals $\left( \Join A \right)_J$.
\end{prop}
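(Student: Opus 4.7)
Both statements hinge on a Galois adjunction between $W_J$ and $W$. For $v \in W_J$ we have $\inv(v) \subseteq W_J$, so
\[v \leq w \iff \inv(v) \subseteq \inv(w) \iff \inv(v) \subseteq \inv(w) \cap W_J = \inv(w_J) \iff v \leq w_J.\]
Equivalently, $w_J$ is the maximum element of $W_J$ lying below $w$ in weak order. In particular, the map $w \mapsto w_J$ is order-preserving and restricts to the identity on $W_J$.

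The meet case is then essentially the statement that a right adjoint preserves meets. Monotonicity gives $(\Meet A)_J \leq a_J$ for every $a \in A$, hence $(\Meet A)_J \leq \Meet A_J$. Conversely, $\Meet A_J \leq a_J \leq a$ for all $a$ yields $\Meet A_J \leq \Meet A$; since $W_J$ is a lower order ideal and $\Meet A_J \leq a_J \in W_J$, we have $\Meet A_J \in W_J$, so applying $_J$ gives $\Meet A_J = (\Meet A_J)_J \leq (\Meet A)_J$.

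For the join case, assume $\Join A$ exists in $W$. Monotonicity gives $a_J \leq (\Join A)_J$ for all $a$, so $(\Join A)_J \in W_J$ is an upper bound of $A_J$. Hence the minimum upper bound of $A_J$ in $W$ exists; being below $(\Join A)_J \in W_J$, it lies in $W_J$ by downward-closedness, and therefore equals $\Join A_J$ both in $W$ and in $W_J$. In particular, $\Join A_J \leq (\Join A)_J$.

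The reverse inequality $(\Join A)_J \leq \Join A_J$ is the delicate point, since right adjoints need not preserve joins in general. My plan is to rewrite $\Join A = \Meet\{u \in W : u \geq a \text{ for all } a \in A\}$ and apply the meet case to obtain
\[(\Join A)_J = \Meet\{u_J : u \in W,\ u \geq a \text{ for all } a \in A\}.\]
Comparing this with $\Join A_J = \Meet\{v \in W_J : v \geq a_J \text{ for all } a \in A\}$, the containment of index sets gives one direction; the reverse requires producing, from each $v \in W_J$ satisfying $v \geq a_J$ for all $a$, some $u \in W$ with $u \geq a$ for all $a$ and $u_J \leq v$. I expect this construction to be the main obstacle, requiring careful analysis of parabolic factorizations $w = w_J \cdot w'$ and of how inversion sets propagate through joins in weak order.
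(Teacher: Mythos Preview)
The paper does not prove this proposition; it simply quotes it as a result of Jedli\v{c}ka and cites~\cite{Jed}. So there is no proof in the paper to compare against.

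That said, let me comment on your attempt. Your adjunction observation---that $w_J$ is the maximum element of $W_J$ below $w$---is correct, and your meet argument is the standard ``right adjoint preserves meets'' argument; that part is fine.

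For the join case, however, what you have written is not a proof but a plan with an explicitly flagged gap. You correctly identify that right adjoints need not preserve joins, so the inequality $(\Join A)_J \leq \Join A_J$ genuinely requires Coxeter-theoretic input beyond the adjunction. Your proposed reformulation---for each upper bound $v \in W_J$ of $A_J$, produce an upper bound $u \in W$ of $A$ with $u_J \leq v$---is equivalent to what you want, but you have not carried it out, and there is no obvious direct construction. In an infinite Coxeter group there is no global top element to fall back on; even in finite type, taking $u = w_0$ yields $u_J = w_0(J)$, the longest element of $W_J$, which satisfies $u_J \leq v$ only when $v = w_0(J)$. So the ``main obstacle'' you anticipate is real, and your write-up stops short of addressing it. The actual proof (in Jedli\v{c}ka's paper) requires a more hands-on argument using the specific combinatorics of the weak order, not just the abstract order-theoretic framework you have set up.
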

As an immediate corollary:
\begin{prop}\label{wJ o p}
The map $w\mapsto w_J$ is order-preserving.
\end{prop}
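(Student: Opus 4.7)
The plan is to deduce this directly from Proposition~\ref{para hom} by applying it to a two-element set, which is exactly the sort of reduction the authors have in mind when they call it an immediate corollary.

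Suppose $u \le w$ in the weak order on $W$. Since the weak order is a meet semilattice, $\Meet\{u,w\}$ exists, and because $u \le w$ we have $\Meet\{u,w\} = u$. Now apply Proposition~\ref{para hom} to the nonempty set $A = \{u,w\}$. This yields
\[
u_J = \left(\Meet\{u,w\}\right)_J = \Meet\{u_J,\, w_J\},
\]
so $u_J$ is the meet of $u_J$ and $w_J$ in the weak order on $W_J$, which is equivalent to saying $u_J \le w_J$. This establishes order-preservation.

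There is no real obstacle to overcome: the whole content is packaged in Proposition~\ref{para hom}. One could alternatively give a direct proof using the inversion-set characterization of the weak order, noting that $\inv(w_J) = \inv(w) \cap W_J$ so that $\inv(u) \subseteq \inv(w)$ immediately implies $\inv(u_J) \subseteq \inv(w_J)$; but the meet-based derivation above is the shortest and matches the phrasing ``as an immediate corollary'' in the text.
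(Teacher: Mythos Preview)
Your proof is correct and is exactly the immediate corollary the paper has in mind: apply Proposition~\ref{para hom} to $A=\{u,w\}$ with $u\le w$ to get $u_J=\Meet\{u_J,w_J\}$, hence $u_J\le w_J$. The alternative inversion-set argument you sketch is also valid and equally short.
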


We now fix a reflection representation for~$W$ in the standard way.
For a more in-depth discussion of the conventions used here, see \cite[Sections~2.2--2.3]{typefree}.
We first form a \newword{generalized Cartan matrix} for~$W$.
This is a real matrix~$A$ with rows and columns indexed by $S$ such that:
\begin{enumerate}
\item[(i) ]$A_{ss}=2$ for every $s\in S$;
\item[(ii) ]$A_{ss'}\le 0$ with $\displaystyle A_{ss'}A_{s's}=4\cos^2\left(\frac{\pi}{m(s,s')}\right)$ when $s\neq s'$ and $m(s,s')<\infty$, and $\displaystyle A_{ss'}A_{s's} \ge 4$ if $m(s,s')=\infty$; and 
\item[(iii) ]$A_{ss'}=0$ if and only if $A_{s's}=0$.
\end{enumerate}
The matrix $A$ is \newword{crystallographic} if it has integer entries.
We assume that $A$ is \newword{symmetrizable}.
That is, we assume that there exists a positive real-valued function $\delta$ on $S$ such that $\delta(s) A_{s s'}=\delta(s') A_{s' s}$ and, if $s$ and $s'$ are conjugate, then\footnote{In the introduction, $A$ arises from a matrix $B$ defining a cluster algebra.
It may appear that requiring $\delta(s)=\delta(s')$ for $s$ conjugate to $s'$ places additional constraints on $B$.
However, this condition on $\delta$ holds automatically when $A$ is crystallographic, as explained in \cite[Section~2.3]{typefree}.}
 $\delta(s)=\delta(s')$. 
 
Let $V$ be a real vector space with basis $\set{\alpha_s:s\in S}$ (the \newword{simple roots}).
Let $s\in S$ act on $\alpha_{s'}$ by $s(\alpha_{s'})=\alpha_{s'}-A_{ss'} \alpha_s$. 
Vectors of the form $w\alpha_s$, for $s\in S$ and $w\in W$, are called \newword{roots}\footnote{In some contexts, these are called \newword{real roots}.}.  
The collection of all roots is the \newword{root system} associated to $A$.
The \newword{positive roots} are the roots which are in the positive linear span of the simple roots.
Each positive root has a unique expression as a positive combination of simple roots.
There is a bijection $t\mapsto\beta_t$ between the reflections $T$ in~$W$ and the positive roots.
Under this bijection, $\beta_s=\alpha_s$ and $w \alpha_s = \pm \beta_{wsw^{-1}}$. 

Let $\alpha_s\ck= \delta(s)^{-1} \alpha_s$.
The set $\set{\alpha_s\ck:s\in S}$ is the set of \newword{simple co-roots}.
The action of~$W$ on simple co-roots is $s(\alpha\ck_{s'})=\alpha\ck_{s'}-A_{s's} \alpha\ck_s$.
Let $K$ be the bilinear form on $V$ given by $K(\alpha\ck_{s}, \alpha_{s'})=A_{s s'}$. 
The form $K$ is symmetric because $K(\alpha_s, \alpha_{s'})= \delta(s) K(\alpha\ck_s, \alpha_{s'}) = \delta(s) A_{s s'} = \delta(s') A_{s' s} = K(\alpha_{s'}, \alpha_s)$.
The action of~$W$ preserves $K$.
We define $\beta_t\ck = (2/K(\beta_t, \beta_t)) \beta_t$. 
If $t = w s w^{-1}$, then $\beta\ck_t = \delta(s)^{-1} \beta_t$.
The action of $t$ on $V$ is by the relation $t \cdot x=x - K(\beta_t\ck, x) \beta_t=x - K(x, \beta_t) \beta\ck_t$.

A reflection $t \in T$ is an inversion of an element $w\in W$ if and only if $w^{-1} \beta_t$ is a negative root.
A simple generator $s\in S$ acts on a positive root $\beta_t$ by $s\beta_t=\beta_{sts}$ if $t\neq s$;
the action of $s$ on $\beta_s=\alpha_s$ is $s\alpha_s=-\alpha_s$. 

The following lemma is a restatement of the second Proposition of~\cite{Pilk}.
\begin{lemma} \label{PilkLemma}
 Let $I$ be a finite subset of $T$. Then the following are equivalent:
\begin{enumerate}
 \item[(i)] There is an element~$w$ of~$W$ such that $I=\inv(w)$.
\item[(ii)] If $r$, $s$ and $t$ are reflections in~$W$, with $\beta_{s}$ in the positive span of $\beta_r$ and $\beta_t$, then $I \cap \{ r,s,t \}\neq\set{s}$ and $I \cap \{ r,s,t \}\neq\set{r,t}$.
\end{enumerate}
\end{lemma}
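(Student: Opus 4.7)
The plan is to prove the two implications separately. For (i) $\Rightarrow$ (ii), I would use the characterization that $t \in \inv(w)$ if and only if $w^{-1}\beta_t$ is a negative root, together with linearity of the $W$-action on $V$. Given $\beta_s = a\beta_r + b\beta_t$ with $a,b > 0$, applying $w^{-1}$ yields $w^{-1}\beta_s = a\,w^{-1}\beta_r + b\,w^{-1}\beta_t$. If $I \cap \{r,s,t\} = \{s\}$, then the right-hand side is a positive combination of positive roots (hence positive) while the left-hand side is negative; the symmetric argument rules out $I \cap \{r,s,t\} = \{r,t\}$.

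For (ii) $\Rightarrow$ (i), I would induct on $|I|$, with base case $I = \emptyset$ handled by $w = e$. The central substep is to exhibit a simple reflection $s \in I$. To find one, I would choose $t \in I$ so that $\beta_t$ has minimal height in the simple root basis, which is possible because $I$ is finite. If $\beta_t$ were not simple, the standard fact that every non-simple positive real root decomposes as $\beta_t = \alpha_s + \beta_{t''}$ for some simple $\alpha_s$ and positive root $\beta_{t''}$ of strictly smaller height, combined with condition (ii) applied to $\{s,t,t''\}$, would force $s \in I$ (a simple reflection, as desired) or $t'' \in I$ (contradicting minimality of the height of $\beta_t$).

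Given such a simple $s \in I$, I would set $I' = s(I \setminus \{s\})s$, a set of $|I| - 1$ reflections whose associated roots are positive because $s$ permutes the positive roots other than $\alpha_s$. The heart of the induction is verifying that $I'$ still satisfies condition (ii). Given a triple in $T$ producing a positive dependence among roots, I would apply $s$: in the generic case, where none of the three reflections is $s$, conjugation transports the dependence to another positive dependence among roots corresponding to reflections in $I$, and condition (ii) for $I$ translates directly to condition (ii) for $I'$. When one of the three reflections equals $s$, one rearranges the linear relation involving $\alpha_s$ to expose a different positive dependence, and the assumption $s \in I$ combined with the forbidden pattern $\{r,t\}$ in condition (ii) for $I$ provides the needed implication. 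By induction $I' = \inv(w')$ with $s \notin \inv(w')$, so $w := sw'$ satisfies $\inv(w) = \{s\} \cup s\,\inv(w')\,s = I$.

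The main obstacle is the existence of a simple reflection in $I$; the rest of the argument is bookkeeping with conjugation. This hinges on the height-reduction fact for non-simple positive real roots, which is elementary for finite and affine Coxeter groups but in the symmetrizable Kac-Moody generality assumed here requires invoking the theory of root strings through a simple root (and uses that $\beta_t$ is a real rather than imaginary root). It is worth noting that finiteness of $I$ is essential throughout: without it, the ``minimal height'' selection fails, and there exist infinite biclosed subsets of $T$ that are not inversion sets of any group element.
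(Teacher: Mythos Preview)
The paper does not supply a proof of this lemma; it is simply cited as a restatement of a proposition of Pilkington. Your argument is the standard one and is correct in outline, but one step needs repair.

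The claimed decomposition $\beta_t = \alpha_s + \beta_{t''}$ with $\beta_{t''}$ a positive \emph{root} of smaller height is false in the paper's setting. Here ``root'' means an element of the $W$-orbit of a simple root; there are no imaginary roots, so your worry about those is moot, but for the same reason $\beta_t - \alpha_s$ is typically not a root at all (e.g.\ in the infinite dihedral group with $A_{s_0 s_1} = A_{s_1 s_0} = -2$, the vector $\alpha_{s_0} + \alpha_{s_1}$ is not in the $W$-orbit of either simple root). The correct replacement: for non-simple $t$ there is a simple $s$ with $c := K(\alpha_s^\vee, \beta_t) > 0$ (otherwise $K(\beta_t, \beta_t) \le 0$, since $\beta_t$ is a nonnegative combination of simple roots); then $\beta_{t''} := s\beta_t = \beta_t - c\,\alpha_s$ is a positive root corresponding to the reflection $t'' = sts$, and $\beta_t = c\,\alpha_s + \beta_{t''}$ lies in the positive span of $\alpha_s$ and $\beta_{t''}$. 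Since $\beta_{t''}$ has strictly smaller height (by the positive real number $c$), your minimality argument over the finite set $I$ still forces $s \in I$. With this fix, the rest of your induction---including the rearrangement when one member of the new triple equals $s$---goes through as you describe.
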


We now review the theory of antimatroids; our reference is~\cite{antimatroid}.
Let $E$ be a finite set and $\LL$ be a collection of subsets of $E$. The pair $(E, \LL)$ is an antimatroid if it obeys the following axioms:\footnote{The reference~\cite{antimatroid} adds the following additional axiom: if $X \in \LL$, $X \neq \emptyset$, then there exists $x \in X$ such that $X \setminus \{ x \} \in \LL$. 
However, Lemma~\ref{BEZ Lemma} shows in particular that axioms (1) and (2) imply a condition numbered $(2')$.
Setting $Y=\emptyset$ and $Z=X$ in condition $(2')$, we easily see that the additional axiom of~\cite{antimatroid} follows from (1) and (2).} 
\begin{enumerate}
\item[$(1)$] $\emptyset \in \LL$.
\item[$(2)$] If $Y \in \LL$ and $Z \in \LL$ such that $Z \not \subseteq Y$, then there is an $x \in (Z \setminus Y)$ such that $Y \cup \{ x \} \in \LL$.
\end{enumerate}

\begin{prop}\label{anti max}
If $(E, \LL)$ is an antimatroid, then $\LL$ has a unique maximal element with respect to containment.
\end{prop}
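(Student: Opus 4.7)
The plan is to split the statement into existence and uniqueness of the maximal element, with uniqueness being the substantive point.

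For existence, I would observe that $\LL$ is nonempty by axiom $(1)$, and every element of $\LL$ is a subset of the finite set $E$, so $\LL$ certainly has at least one element that is maximal with respect to inclusion (start from $\emptyset$ and keep adding elements while one remains in $\LL$; this terminates because $E$ is finite).

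For uniqueness, I would argue by contradiction. Suppose $M_1$ and $M_2$ are two distinct maximal elements of $\LL$. Since each is maximal, neither can be a proper subset of the other, so in particular $M_2 \not\subseteq M_1$. Apply axiom $(2)$ with $Y = M_1$ and $Z = M_2$: there exists $x \in M_2 \setminus M_1$ such that $M_1 \cup \{x\} \in \LL$. But $M_1 \cup \{x\}$ strictly contains $M_1$, contradicting the maximality of $M_1$.

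There is really no obstacle here; the entire content is the observation that axiom $(2)$ prevents two distinct maximal elements from coexisting by forcing one of them to be extendable whenever the other contains something it lacks. Existence is a consequence of finiteness of $E$ together with $\emptyset \in \LL$.
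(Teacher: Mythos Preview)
Your proof is correct and follows essentially the same argument as the paper: nonemptiness of $\LL$ (from axiom~$(1)$) together with finiteness of $E$ gives existence, and axiom~$(2)$ applied to two putative maximal elements forces one of them to be extendable, a contradiction. The only difference is cosmetic---the paper omits your parenthetical remark about building up from $\emptyset$ and phrases the uniqueness step slightly more tersely.
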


\begin{proof}
By axiom $(1)$, $\LL$ is nonempty, so it has at least one maximal element. 
Suppose that $Y$ and $Z$ are both maximal elements of $\LL$. 
Since $Z$ is maximal, it is not contained in $Y$.
Now, axiom $(2)$ implies that $Y$ is not maximal, a contradiction.
\end{proof}

The next lemma and its proof are modeled after~\cite[Lemma~2.1]{BEZ}: 

\begin{lemma} \label{BEZ Lemma}
Let $E$ be a finite set and $\LL$ a collection of subsets of $E$. Then $\LL$ is an antimatroid if and only if $\LL$ obeys the following conditions.
\begin{enumerate}
\item[$(1)$] $\emptyset \in \LL$.
\item[$(2')$] For any $Y$ and $Z \in \LL$, with $Y \subseteq Z$, there is a chain $Y=X_0 \subset X_1 \subset \cdots \subset X_l =Z$ with every $X_i \in \LL$ and $\# X_{i+1} = \# X_{i} +1$.
\item[$(3')$] Let $X$ be in $\LL$ and let $y$ and $z$ be in $E \setminus X$ such that $X \cup \{ y \}$ and $X \cup \{z \}$ are in $\LL$. 
Then $X \cup \{ y,z \}$ is in $\LL$.
\end{enumerate}
\end{lemma}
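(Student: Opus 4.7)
The plan is to handle each direction of the biconditional separately. The forward direction, that the antimatroid axioms (1) and (2) imply conditions (1), $(2')$, $(3')$, should fall out of iterated applications of (2). For $(2')$, given $Y \subseteq Z$ both in $\LL$, I would repeatedly apply (2) to the pair (current set, $Z$): each step furnishes a new element of $Z \setminus (\text{current})$ which can be adjoined while staying in $\LL$, and finiteness of $Z$ terminates the process at $Z$. For $(3')$, given $X$, $X\cup\{y\}$, $X\cup\{z\}$ in $\LL$ with $y \neq z$ both outside $X$, apply (2) with the roles $Y:=X\cup\{y\}$ and $Z:=X\cup\{z\}$; since $Z\setminus Y = \{z\}$, the only element that (2) can produce is $z$, yielding $X\cup\{y,z\}\in\LL$.

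For the reverse direction, assume (1), $(2')$, $(3')$ and take $Y, Z\in\LL$ with $Z\not\subseteq Y$; I must exhibit $x\in Z\setminus Y$ with $Y\cup\{x\}\in\LL$. First apply $(2')$ to $\emptyset \subseteq Z$ to get a chain $\emptyset = Z_0 \subset Z_1 \subset \cdots \subset Z_n = Z$ in $\LL$ growing one element at a time. Let $k$ be the smallest index with $Z_k\not\subseteq Y$; such $k$ exists since $Z\not\subseteq Y$, and $k\ge 1$ since $Z_0=\emptyset\subseteq Y$. Write $Z_k = Z_{k-1}\cup\{x\}$, noting $Z_{k-1}\subseteq Y$ and $x\in Z\setminus Y$. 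Next apply $(2')$ to $Z_{k-1}\subseteq Y$ to get a chain $Z_{k-1} = W_0 \subset W_1\subset \cdots \subset W_p = Y$ in $\LL$, with $W_{i+1} = W_i\cup\{y_{i+1}\}$ for some $y_{i+1}\in Y$.

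The heart of the argument is an induction on $i$ establishing that $W_i\cup\{x\}\in\LL$. The base case $i=0$ is immediate since $W_0\cup\{x\} = Z_k \in \LL$. For the inductive step, both $W_i\cup\{x\}$ (by hypothesis) and $W_i\cup\{y_{i+1}\} = W_{i+1}$ lie in $\LL$, and $x\neq y_{i+1}$ because $y_{i+1}\in Y$ while $x\notin Y$; applying $(3')$ with $X:=W_i$, $y:=x$, $z:=y_{i+1}$ then gives $W_{i+1}\cup\{x\}\in\LL$. Taking $i=p$ yields $Y\cup\{x\}\in\LL$, verifying axiom (2).

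The main obstacle is the reverse direction, specifically the need to transport the ``good'' extension element $x$ sideways across the entire chain from $Z_{k-1}$ up to $Y$. The key realization making this possible is that $(2')$ supplies a chain of one-element extensions, and $(3')$ is precisely the tool that lets one merge a sideways extension (by $x$) with a forward step along the chain (by $y_{i+1}$) at every stage, so the two interact cleanly in an induction.
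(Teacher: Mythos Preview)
Your proof is correct and follows the same overall strategy as the paper's: the forward direction is identical (iterate axiom~(2) for $(2')$; a single application of~(2) for $(3')$), and the reverse direction uses the same inductive climb along a $(2')$-chain, invoking $(3')$ at each step to carry the extra element $x$ upward.

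The one substantive difference is your choice of anchor point for the two chains. The paper takes $X = Y \cap Z$ and then applies $(2')$ to build chains from $X$ up to $Z$ and from $X$ up to $Y$. But $(2')$ requires its endpoints to lie in $\LL$, and $Y \cap Z$ need not: for instance, with $E=\{1,2,3\}$ and $\LL=\{\emptyset,\{1\},\{2\},\{1,2\},\{1,3\},\{2,3\},\{1,2,3\}\}$ (which one checks satisfies $(1)$, $(2')$, $(3')$), taking $Y=\{1,3\}$ and $Z=\{2,3\}$ gives $Y\cap Z=\{3\}\notin\LL$. Your argument avoids this by first running a $(2')$-chain from $\emptyset$ to $Z$ and taking $Z_{k-1}$ as the anchor; this set is in $\LL$ by construction and lies inside $Y$ by the minimality of $k$, so the second application of $(2')$ is legitimate. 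Thus your version is not only correct but actually patches a small gap in the paper's argument.
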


\begin{proof}
First, we show that, if $(E, \LL)$ is an antimatroid, then $(E, \LL)$ obeys conditions $(2')$ and $(3')$.
For condition~$(2')$, we construct the $X_i$ inductively: 
Take $X_0$ to be~$Y$. 
If $X_i\neq Z$ then we apply axiom~$(2)$ to the pair $Z \not\subseteq X_i$ and set $X_{i+1} = X_i \cup \{ x \}$.
For condition~$(3')$, apply axiom~$(2)$ with $Y=X \cup \{ y \}$ and $Z = X \cup \{ z \}$.

Now we assume conditions~$(1)$, $(2')$ and~$(3')$ and show axiom~$(2)$. 
Let $X$ be an element of $\LL$ which is maximal subject to the condition that $X \subseteq Y \cap Z$. By condition~$(1)$, such an $X$ exists and, as $Z \not \subseteq Y$, we know that $X \subsetneq Z$.
Using condition~$(2')$, let $X = W_0 \subset W_1 \subset \cdots \subset W_l =Z$ be a chain from $X$ to $Z$ and let $W_1 = X \cup \{ x \}$. 
We now show that $x$ has the desired property.
By the maximality of $X$, we know that $x \not \in Y$.
Use condition~$(2')$ again to construct a chain $X = X_0 \subset X_1 \subset \cdots \subset X_r = Y$ from $X$ to $Y$. 
We will show by induction on $i$ that $X_i \cup \{ x \}$ is in $\LL$.
For $i=0$, this is the hypothesis that $W_1 \in \LL$.
For larger $i$, apply condition~$(3')$ to the set $X_{i-1}$, the unique element of $X_{i} \setminus X_{i-1}$, and the element~$x$. 
\end{proof}

For the remainder of the section, we fix~$W$,~$w$ and~$\Omega$, and we omit these from the notation where it does not cause confusion. 
Thus we write $L$ for the set $L(w,\Omega)$ of subsets~$J$ of $S$ such that~$J$ is $\Omega$-acyclic and $c(\Omega,J) \leq w$. 
We now turn to verifying conditions $(1)$, $(2')$ and $(3')$ for the pair $(S,L)$. 
Condition~$(1)$ is immediate.

\begin{lemma} \label{tea room}
Let $J_1$ and $J_2 \in L$.
Suppose that $J_1 \cup J_2$ is $\Omega$-acyclic and $\Omega|_{J_1 \cup J_2}$ has a linear extension $(q_1, q_2, \ldots, q_k, r, s_1, s_2, \ldots, s_l)$, where $J_1$ is $\{ q_1, q_2, \ldots, q_k, r \}$ and $J_2$ is $\{q_1, q_2, \ldots, q_k, s_1, s_2, \ldots, s_l \}$. 
Then $J_1 \cup J_2$ is in~$L$.
\end{lemma}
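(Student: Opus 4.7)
The plan is to verify the inclusion $\inv(c(\Omega, J_1 \cup J_2)) \subseteq \inv(w)$ directly, using Lemma~\ref{PilkLemma}. Write $u = q_1 q_2 \cdots q_k$ and $v = s_1 s_2 \cdots s_l$. Reading off the given linear extension and its natural restrictions to $J_1$ and $J_2$, we obtain reduced expressions $c(\Omega, J_1) = ur$, $c(\Omega, J_2) = uv$, and $c(\Omega, J_1 \cup J_2) = urv$. Setting $I = \inv(w)$, the hypothesis $J_1, J_2 \in L$ becomes $\inv(ur) \subseteq I$ and $\inv(uv) \subseteq I$, and the goal is $\inv(urv) \subseteq I$.

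For the reduced product $u \cdot r \cdot v$, the standard decomposition of inversions yields
\[\inv(urv) = \inv(u) \sqcup \set{u r u^{-1}} \sqcup (ur)\inv(v)(ur)^{-1}.\]
The first two summands already lie in $\inv(ur) \subseteq I$. So the task reduces to showing that $t_{ur\beta} := (ur) t_\beta (ur)^{-1}$ lies in $I$ for every $t_\beta \in \inv(v)$.

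Fix such $t_\beta$. The key geometric observation is that $\beta$ is a positive root of the root subsystem indexed by $W_{J_2}$, so the expansion $\beta = \sum_{s \in J_2} n_s \alpha_s$ has all $n_s \ge 0$, and crucially the coefficient of $\alpha_r$ is zero because $r \notin J_2$. Therefore
\[c := K(\alpha_r\ck, \beta) = \sum_{s \in J_2} n_s A_{r,s} \le 0,\]
using that $A_{r,s} \le 0$ for $s \neq r$. The reflection formula gives $r\beta = \beta - c\alpha_r$, and applying $u$ yields $ur\beta = \gamma - c\delta$, where $\gamma := u\beta$ and $\delta := u\alpha_r$. Both $\gamma$ and $\delta$ are positive roots, corresponding to the reflections $u t_\beta u^{-1} \in \inv(uv) \subseteq I$ and $u r u^{-1} \in \inv(ur) \subseteq I$; in particular $t_\gamma, t_\delta \in I$.

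If $c = 0$, then $ur\beta = \gamma$ and $t_{ur\beta} = t_\gamma \in I$. Otherwise $c < 0$ and $ur\beta = \gamma + |c|\delta$ is a strictly positive combination of the positive roots $\gamma$ and $\delta$, placing $ur\beta$ in the positive span of these two roots. Apply Lemma~\ref{PilkLemma}(ii) to the triple $(t_\gamma, t_{ur\beta}, t_\delta)$ with middle root $ur\beta$: since $\set{t_\gamma, t_\delta} \subseteq I$ and Pilkington's condition forbids $I \cap \set{t_\gamma, t_{ur\beta}, t_\delta}$ from equaling $\set{t_\gamma, t_\delta}$, we conclude $t_{ur\beta} \in I$. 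The main obstacle here is bookkeeping with sign conventions---verifying $K(\alpha_r\ck, \beta) \le 0$ for our specific $\beta$, and correctly identifying $ur\beta$ (not $\gamma$ or $\delta$) as the middle root of the Pilkington triple---but once this is set up the argument is essentially a one-line application of the Pilkington condition, with no inductive reduction needed.
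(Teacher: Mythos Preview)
Your proof is correct and follows essentially the same approach as the paper's: both arguments hinge on the observation that $K(\alpha_r\ck,\beta)\le 0$ for any positive root $\beta$ supported on $\{s_1,\ldots,s_l\}$, and then invoke Lemma~\ref{PilkLemma} to deduce that the ``middle'' root $r\beta$ (or its $u$-conjugate) corresponds to an inversion. The only cosmetic difference is that the paper first strips off the common prefix $q_1\cdots q_k$ via Lemma~\ref{AboveBelowLite} and applies Lemma~\ref{PilkLemma} to the inversion set of $(q_1\cdots q_k)^{-1}w$, whereas you carry the prefix along and work directly with $\inv(w)$; the triples of roots involved are literally $u$-conjugates of one another.
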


\begin{proof}
Since $J_1\in L$, we have $q_1 \cdots q_k\le q_1 \cdots q_kr=c(\Omega,J_1)\le w$.  
Similarly, because $J_2 \in L$, we know that $q_1\cdots q_ks_1\cdots s_l\le w$.
Defining $u$ so that $w=q_1 \cdots q_k u$, repeated applications of Lemma~\ref{AboveBelowLite} imply that $r\le u$ and also that $s_1\cdots s_l\le u$.

Define $t_1=s_1$, $t_2=s_1 s_2 s_1$, $t_3=s_1 s_2 s_3 s_2 s_1$ and so forth. 
The $t_i$ are inversions of $s_1\cdots s_l$, and thus they are inversions of $u$. 
Each $\beta_{t_i}$ is in the positive linear span of the simple roots $\set{\alpha_{s_j}:j=1,2,\ldots,l}$. 
None of these simple roots is $\alpha_r$, and since off-diagonal entries of $A$ are nonpositive, we have $K(\alpha_r\ck,\beta_{t_i})\le 0$.
So the positive root $\beta_{rt_ir}=r\beta_{t_i}=\beta_{t_i}-K(\alpha_r\ck,\beta_{t_i})\alpha_r$ is in the positive linear span of $\beta_r$ and $\beta_{t_i}$. 
Since~$t_i$ is an inversion of $u$, and $r$ is as well, we deduce by Lemma~\ref{PilkLemma} that $r t_i r$ is also an inversion of $u$. 
So $r$, $rt_1 r$, $r t_2 r$, \dots, and $r t_l r$ are inversions of $u$.
But $\inv(rs_1\cdots s_l)=\set{r,rt_1 r,r t_2 r, \dots,r t_l r}$, so $u\ge rs_1\cdots s_l$.
Applying Lemma~\ref{AboveBelowLite} repeatedly, we conclude that $w \geq (q_1 q_2 \cdots q_k) r  (s_1 \cdots s_l)=c(\Omega,J_1 \cup J_2)$.   
\end{proof}

We now establish condition $(2')$ for the pair $(S,L)$. 

\begin{lemma} \label{L Is Graded}
Let $I \subset J$ be two elements of $L$. Then there exists a chain $I = K_0 \subseteq K_1 \subseteq \ldots \subseteq K_l = J$ with each $K_i \in L$ and $\# K_{i+1} = \# K_i +1$.
\end{lemma}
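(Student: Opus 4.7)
The plan is to proceed by induction on $|J\setminus I|$: the inductive step reduces to producing a single $r\in J\setminus I$ with $I\cup\{r\}\in L$, after which the hypothesis can be applied to the pair $I\cup\{r\}\subseteq J$ and the resulting chain prepended with $I$. The natural tool for upgrading two known members of $L$ to a slightly larger one is Lemma~\ref{tea room}, so the real task is to arrange its hypotheses.

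To that end, I would choose $r$ to be an element of $J\setminus I$ that is minimal in the poset $\Omega|_J$, and set $I_r=\{x\in I:x<_J r\}$. The point of taking $r$ minimal inside $J\setminus I$ (rather than, say, minimal in $J$) is that it forces $\{x\in J:x<_J r\}\subseteq I$: any $y\in J$ strictly below $r$ cannot itself lie in $J\setminus I$. Hence this strict down-set coincides with $I_r$, and $I_r\cup\{r\}$ is the principal order ideal of $\Omega|_J$ generated by $r$; in particular it is a down-set of $\Omega|_J$. Choosing a linear extension of $\Omega|_J$ that begins with a linear extension of $\Omega|_{I_r\cup\{r\}}$ exhibits $c(\Omega,I_r\cup\{r\})$ as a prefix of a reduced word for $c(\Omega,J)\le w$, so $c(\Omega,I_r\cup\{r\})\le w$ and $I_r\cup\{r\}\in L$.

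With this in hand, I would apply Lemma~\ref{tea room} with $J_1=I_r\cup\{r\}$ and $J_2=I$. Then $J_1\cap J_2=I_r$, $J_1\setminus J_2=\{r\}$, $J_2\setminus J_1=I\setminus I_r$, and $J_1\cup J_2=I\cup\{r\}\subseteq J$ is $\Omega$-acyclic. The remaining piece of the hypothesis is a linear extension of $\Omega|_{I\cup\{r\}}$ of the form $(q_1,\ldots,q_k,r,s_1,\ldots,s_l)$ with $\{q_i\}=I_r$ and $\{s_j\}=I\setminus I_r$. The only nontrivial relation to verify is that no $s\in I\setminus I_r$ satisfies $s<_J q$ for some $q\in I_r$; if it did, transitivity with $q<_J r$ would give $s<_J r$ and hence $s\in I_r$, a contradiction. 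Lemma~\ref{tea room} then yields $I\cup\{r\}\in L$, closing the induction.

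The step I expect to be the main obstacle is the choice of $r$. It is crucial to pick $r$ minimal specifically in $J\setminus I$ (rather than minimal in all of $J$, or maximal in $J\setminus I$) so that $I_r\cup\{r\}$ is simultaneously a down-set of $\Omega|_J$, which supplies its membership in $L$ for free, and is exactly the right set to play the role of $J_1$ in Lemma~\ref{tea room} with the required linear extension. Once the correct minimality condition is isolated, both properties fall out of short order-theoretic observations.
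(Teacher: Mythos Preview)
Your proof is correct and follows essentially the same route as the paper's. Both arguments reduce to finding a single $r\in J\setminus I$ with $I\cup\{r\}\in L$, pick $r$ minimal in $J\setminus I$ with respect to $\le_J$, exhibit a down-set of $\Omega|_J$ ending in $r$ as a member of $L$ (since it gives a prefix of $c(\Omega,J)\le w$), and then apply Lemma~\ref{tea room} against $I$. The only cosmetic difference is that the paper fixes a linear extension $(y_1,\ldots,y_j)$ of $\Omega|_J$ at the outset and takes $r=y_a$ to be the first entry not in $I$; this packaging makes the linear extension required by Lemma~\ref{tea room} come for free as the restriction of $(y_1,\ldots,y_j)$ to $I\cup\{y_a\}$, whereas you build it by hand from $I_r$ and $I\setminus I_r$.
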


\begin{proof}
It is enough to find an element $I'$ of $L$, of cardinality $\# I+1$, with $I \subset  I' \subseteq J$. 
Let $(y_1, y_2, \cdots y_j)$ be a linear extension of $\Omega|_J$. 
Let $y_{a}$ be the first entry of~$(y_1, y_2, \cdots y_j)$ which is not in $I$. 
So $w \geq c(\Omega,J) \geq y_1 y_2 \cdots y_{a-1} y_{a}$. 
Applying Lemma~\ref{tea room} to $(y_1, y_2, \cdots y_{a})$ and $I$, we conclude that $I \cup \{ y_1, y_2, \cdots y_{a} \}=I \cup \{ y_a \}$ is in $L$. 
Taking $I \cup \{ y_a \}$ for $I'$, we have achieved our goal.
\end{proof}

We now prepare to prove that $(S,L)$ satisfies condition~$(3')$.

\begin{lemma} \label{Inversion Formula} 
Let~$J$ be $\Omega$-acyclic and let $(s_1, s_2, \ldots, s_k)$ be a linear extension of $\Omega|_J$. 
Set $t=s_1 s_2 \cdots s_k \cdots s_2 s_1$. 
Then
\begin{equation}
\beta_t=\sum_{(r_1, r_2, \ldots,  r_j)} (- A_{r_j r_{j-1}}) \cdots (- A_{r_3 r_2}) (- A_{r_2 r_1}) \alpha_{r_1} 
\label{Inversion Expansion} \end{equation}
where the sum runs over all directed paths $r_1 \from r_2 \from \cdots \from r_j$ in $\Gamma \cap J$ with $r_j=s_k$. 
\end{lemma}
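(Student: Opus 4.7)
The approach is to compute $\beta_t$ directly by iterating the action formula $s(\alpha_{s'}) = \alpha_{s'} - A_{s s'}\alpha_s$, and then to recognize the resulting expansion as the sum over paths in \eqref{Inversion Expansion}.

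Since $(s_1, s_2, \ldots, s_k)$ is a linear extension of $\Omega|_J$, the product $s_1 s_2 \cdots s_k$ is a reduced expression for $c(\Omega,J)$, and $t$ is its $k\th$ inversion in the reading order. The standard formula for inversion roots of a reduced word then gives $\beta_t = s_1 s_2 \cdots s_{k-1}(\alpha_{s_k})$. I would prove a slightly more general statement by downward induction on $m\in\{1,2,\ldots,k\}$: namely, that $\gamma_m := s_m s_{m+1} \cdots s_{k-1}(\alpha_{s_k})$ equals the right-hand side of \eqref{Inversion Expansion}, but restricted to those directed paths whose vertices all lie in $\{s_m, s_{m+1}, \ldots, s_k\}$. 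The base case $m=k$ is the tautology $\alpha_{s_k}=\alpha_{s_k}$, matched by the unique trivial path $(s_k)$. For the inductive step, I would write $\gamma_{m-1} = s_{m-1}\gamma_m$ and apply $s_{m-1}$ linearly to the expansion of $\gamma_m$ using $s_{m-1}(\alpha_r) = \alpha_r - A_{s_{m-1},r}\alpha_{s_{m-1}}$; the $\alpha_r$ pieces preserve the old paths (still valid in $\{s_{m-1},s_m,\ldots,s_k\}$), while the $\alpha_{s_{m-1}}$ pieces correspond to new paths obtained by prepending $s_{m-1}$ and introducing one more Cartan factor.

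The one substantive point to verify is that each newly prepended path $(s_{m-1}, r_1, r_2, \ldots, r_j)$ is indeed a directed path in $\Gamma$, i.e.\ that the edge $\{s_{m-1}, r_1\}$ is oriented $s_{m-1} \from r_1$. This is where the linear-extension hypothesis does all the work: if $A_{s_{m-1}, r_1} \neq 0$ then $\{s_{m-1}, r_1\}$ is an edge of $\Gamma$, and since $r_1 \in \{s_m, \ldots, s_k\}$ appears strictly later than $s_{m-1}$ in the linear extension, the edge cannot be oriented $r_1 \from s_{m-1}$ (which would force $r_1 <_J s_{m-1}$ in $\Omega|_J$), so it must be oriented $s_{m-1} \from r_1$, as needed. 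Taking $m=1$ recovers the lemma; the restriction to paths inside $J$ is not actually a restriction, since any path in $\Gamma$ using a vertex $r\notin J$ would contribute an $\alpha_r$ with $r\notin J$, whereas $s_1 \cdots s_{k-1}(\alpha_{s_k})$ is manifestly supported on $\{\alpha_{s_1},\ldots,\alpha_{s_k}\}$. I do not anticipate any real obstacle beyond index bookkeeping; the only combinatorial content is the single linear-extension observation above.
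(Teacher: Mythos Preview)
Your argument is correct and is essentially the paper's own: both compute $\beta_t = s_1\cdots s_{k-1}\alpha_{s_k}$ by iterating the reflection formula. The paper first obtains, by the obvious induction, the sum over \emph{all} subsequences of $(s_1,\ldots,s_k)$ ending in $s_k$, then discards terms with $A_{r_{i+1} r_i}=0$ (leaving only subsequences that are paths in $\Gamma$), and finally uses the linear-extension hypothesis to see that every surviving path is directed; you simply fold these two simplifications into each step of the induction.

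One small correction to your closing remark: it is not true that any path through a vertex $r\notin J$ must contribute an $\alpha_r$ with $r\notin J$---the contribution of a path $(r_1,\ldots,r_j)$ is a multiple of $\alpha_{r_1}$, and $r_1$ may lie in $J$ even if some intermediate vertex does not. What both your argument and the paper's actually establish is the sum over directed paths \emph{within $J$}; the phrase ``in $\Gamma$'' in the lemma statement should be read with that implicit restriction (and this is exactly what the subsequent applications of the lemma use).
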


\begin{proof}
By a simple inductive argument, 
\[\beta_t=\sum_{(r_1, r_2, \ldots,  r_j)} (- A_{r_j r_{j-1}}) \cdots (- A_{r_3 r_2}) (- A_{r_2 r_1}) \alpha_{r_1},\]
where the summation runs over all subsequences of $(s_1, s_2, \ldots, s_k)$ ending in $s_k$. 
If there is no edge of $\Gamma$ between $r_i$ and $r_{i+1}$ then $(- A_{r_{i+1} r_i})=0$ so in fact we can restrict the summation to all 
subsequences which are also the vertices of a path through $\Gamma$. 
Since $(s_1, s_2, \ldots, s_k)$ is a linear extension of $\Omega|_J$, we sum over all directed paths $r_1\from r_2 \from \cdots \from r_j$ with $r_j=s_k$.
\end{proof}

\begin{lemma} \label{coeff at least 1}
Suppose $A$ is symmetric or crystallographic.
Let~$J$ be $\Omega$-acyclic and let $(s_1, s_2, \ldots, s_k)$ be a linear extension of $\Omega|_J$. 
Set $t=s_1 s_2 \cdots s_k \cdots s_2 s_1$. 
If $r\in J$ has $r \leq_J s_k$ then $\alpha_r$ appears with coefficient at least $1$ in the simple root expansion of $\beta_t$.
\end{lemma}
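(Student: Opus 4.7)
The plan is to apply Lemma~\ref{Inversion Formula} directly. By that lemma, the coefficient of $\alpha_r$ in the simple-root expansion of $\beta_t$ is
\[
[\alpha_r]\,\beta_t = \sum_{(r_1, r_2, \ldots, r_j)} (-A_{r_j r_{j-1}}) \cdots (-A_{r_3 r_2})(-A_{r_2 r_1}),
\]
where the sum runs over all directed paths $r_1 \from r_2 \from \cdots \from r_j$ in $\Gamma$ (hence in $\Omega|_J$, by the linear-extension condition) with $r_1 = r$ and $r_j = s_k$. Since off-diagonal entries of $A$ are nonpositive, every factor $-A_{r_{i+1} r_i}$ is nonnegative, so this is a sum of nonnegative terms.

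First I would observe that the hypothesis $r \leq_J s_k$ guarantees the existence of at least one directed path $r \from r_2 \from \cdots \from s_k$ in $\Omega|_J$ (by definition of $\leq_J$ as the transitive closure of the edge relation). Hence the sum is nonempty, and it suffices to show that each summand is at least $1$, i.e.\ that every factor $-A_{r_{i+1} r_i}$ corresponding to an actual edge of $\Gamma$ satisfies $-A_{r_{i+1} r_i} \geq 1$.

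This is the only place where the symmetric-or-crystallographic hypothesis is used, and it will be handled by a case split. If $A$ is crystallographic, then each $-A_{r_{i+1} r_i}$ is a nonnegative integer, and since $r_{i+1} \from r_i$ is an edge of $\Gamma$ we have $A_{r_{i+1} r_i} \neq 0$, forcing $-A_{r_{i+1} r_i} \geq 1$. If instead $A$ is symmetric, then $A_{r_{i+1} r_i}^2 = A_{r_{i+1} r_i} A_{r_i r_{i+1}}$ equals either $4\cos^2(\pi/m)$ for some $m \geq 3$ or is $\geq 4$ (the $m = \infty$ case), so $-A_{r_{i+1} r_i} = 2\cos(\pi/m) \geq 2\cos(\pi/3) = 1$ in the finite case and $-A_{r_{i+1} r_i} \geq 2$ in the infinite case; in either event $-A_{r_{i+1} r_i} \geq 1$.

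Combining these observations, every summand in the expression for $[\alpha_r]\,\beta_t$ is a product of factors each $\geq 1$, hence is itself $\geq 1$; since the sum is indexed over a nonempty set of paths, the coefficient of $\alpha_r$ in $\beta_t$ is at least $1$. I do not anticipate a real obstacle here; the only subtlety is remembering that the two hypotheses (symmetric, crystallographic) must be handled separately, since neither implies the other, and that the $m=2$ case (no edge between $r_{i+1}$ and $r_i$) simply does not arise because the path lies in $\Gamma$.
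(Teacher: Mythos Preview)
Your proof is correct and follows essentially the same approach as the paper: apply Lemma~\ref{Inversion Formula}, observe that the symmetric-or-crystallographic hypothesis forces each factor $-A_{r_{i+1} r_i}$ to be at least $1$, and use $r \leq_J s_k$ to guarantee at least one path in the sum. The paper compresses the case split into the single remark that $A_{ij} \leq -1$ whenever $A_{ij} < 0$, whereas you spell out the symmetric and crystallographic cases separately; otherwise the arguments are identical.
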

\begin{proof}
Since $A$ is either symmetric or crystallographic, $A_{ij} \leq -1$ whenever $A_{ij} <0$. 
Thus in Lemma~\ref{Inversion Formula}, every coefficient $(- A_{r_j r_{j-1}}) \cdots (- A_{r_3 r_2}) (- A_{r_2 r_1})$ in the sum is at least one. 
If $r \geq_J s_k$ then there is a directed path from $r$ to $s_k$ through~$J$, so the coefficient of $\alpha_r$ in $\beta_t$ is at least one.
\end{proof}

\begin{lemma} \label{No Chains}
Let $P$ and $Q$ be disjoint, $\Omega$-acyclic subsets of $S$. 
Suppose there exists $p\in P$ and $q\in Q$ such that there is an oriented path from $p$ to $q$ within $P \cup \set {q}$ and an oriented path from $q$ to $p$ within $Q \cup \set{p}$. 
Then there is no element of~$W$ which is greater than both $c(\Omega,P)$ and $c(\Omega,Q)$.
\end{lemma}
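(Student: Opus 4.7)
The plan is a proof by contradiction: suppose $w \in W$ satisfies both $w \ge c(\Omega,P)$ and $w \ge c(\Omega,Q)$.

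First I would reduce to the case where $p$ is maximal in the poset $\Omega|_P$ and $q$ is maximal in $\Omega|_Q$. Replacing $P$ by the down-set $\{x \in P : x \le_P p\}$ (and $Q$ analogously) keeps the hypothesis intact, since all internal path vertices $u_i$ lie below $p$ in $\Omega|_P$, and the down-set of $p$ can be listed first in a linear extension of $\Omega|_P$, so its Coxeter element is a prefix of $c(\Omega,P)$, hence $\le c(\Omega,P) \le w$. Under this reduction there are linear extensions of $\Omega|_P$ and $\Omega|_Q$ ending with $p$ and $q$ respectively, giving reduced expressions $c(\Omega,P) = p_1 \cdots p_{m-1}\, p$ and $c(\Omega,Q) = q_1 \cdots q_{n-1}\, q$. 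The ``last'' reflections $t_P = p_1 \cdots p_{m-1}\, p\, p_{m-1} \cdots p_1$ and $t_Q$ (defined symmetrically) are inversions of $c(\Omega,P)$ and $c(\Omega,Q)$, hence of $w$.

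Next I would compute the positive roots of these reflections via Lemma~\ref{Inversion Formula}. Writing the $P$-path as $p = u_0 \to u_1 \to \cdots \to u_{k-1} \to q$ and the $Q$-path as $q = v_0 \to v_1 \to \cdots \to v_{l-1} \to p$, the formula shows that $\beta_{t_P}$ has $\alpha_p$-coefficient $1$ (from the trivial path) and strictly positive coefficient on $\alpha_{u_{k-1}}$ (from the directed path $u_{k-1} \from \cdots \from p$ inside $\Omega|_P$); symmetrically, $\beta_{t_Q}$ has $\alpha_q$-coefficient $1$ and strictly positive coefficient on $\alpha_{v_{l-1}}$.

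The heart of the argument is then to show that the dihedral reflection subgroup $\langle t_P, t_Q \rangle$ has infinite order, equivalently that $K(\beta_{t_P}\ck, \beta_{t_Q}) \cdot K(\beta_{t_Q}\ck, \beta_{t_P}) \ge 4$. Expanding $K(\beta_{t_P}, \beta_{t_Q}) = \sum_{r \in P,\, s \in Q,\, r \sim s} \lambda_r \mu_s K(\alpha_r, \alpha_s)$ in the simple-root bases, every summand is strictly negative, and the two back edges $u_{k-1} \to q$ and $v_{l-1} \to p$ supplied by the cycle hypothesis guarantee two nonzero terms involving the coefficients $\lambda_{u_{k-1}},\, \mu_q = 1$ and $\lambda_p = 1,\, \mu_{v_{l-1}}$. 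Applying AM--GM to these two back-edge contributions, combined with the symmetrizability identity $\delta(r) A_{rs} = \delta(s) A_{sr}$, the cyclic telescoping $\prod_{\text{cycle}} \delta(j)/\delta(i) = 1$, and the lower bounds on $\lambda_{u_{k-1}}$ and $\mu_{v_{l-1}}$ from the previous paragraph, yields the required inequality.

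Once $\langle t_P, t_Q \rangle$ is infinite, it contains infinitely many distinct reflections, all of whose positive roots lie in the two-dimensional positive cone spanned by $\beta_{t_P}$ and $\beta_{t_Q}$. Iteratively applying Lemma~\ref{PilkLemma} to $\inv(w) \supseteq \{t_P, t_Q\}$ forces every such reflection into $\inv(w)$, which contradicts the finiteness of $\inv(w)$. The main obstacle is the third paragraph: translating the purely combinatorial cycle hypothesis into the quantitative $K$-form estimate, which is precisely where the existence of the two back edges (supplied by the hypothesis) is essential---without them, the analogous pairing in an acyclic situation would be too small to push the dihedral subgroup past the finite threshold.
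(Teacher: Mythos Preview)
Your argument is correct and follows the same overall architecture as the paper's: locate two inversions $t_P$, $t_Q$ of $w$ (one from each Coxeter element), show that they generate an infinite dihedral subgroup, and invoke Lemma~\ref{PilkLemma} to conclude that no element can have both as inversions.  Your reduction to the case where $p$ and $q$ are maximal is equivalent to the paper's move of truncating the reduced word at the position of $p$ (respectively $q$); the paper just phrases it as choosing the inversions $p_1\cdots p_j\cdots p_1$ and $q_1\cdots q_m\cdots q_1$ without explicitly shrinking $P$ and $Q$.

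The genuine difference is in how the infinite-dihedral inequality is obtained.  The paper observes at the outset that the lemma is a purely combinatorial statement about $W$, independent of the choice of Cartan matrix $A$, and therefore takes $A$ to be \emph{symmetric}.  With that choice every off-diagonal entry satisfies $|A_{ij}|\ge 1$, Lemma~\ref{coeff at least 1} gives integer lower bounds on the relevant simple-root coefficients, and one reads off $K(\beta_t,\beta_u)\le -2$ directly, with $\beta_t\ck=\beta_t$.  Your route instead keeps the general symmetrizable $A$ and recovers $K(\beta_{t_P}\ck,\beta_{t_Q})\,K(\beta_{t_Q}\ck,\beta_{t_P})\ge 4$ via AM--GM on the two back-edge terms, using the path-product lower bound from Lemma~\ref{Inversion Formula}; the point is that the resulting product of $|A|$-entries runs once around the cycle in a fixed orientation, so the symmetrizer ratios $\delta(b)/\delta(a)$ telescope to $1$ and each edge contributes $\sqrt{A_{ab}A_{ba}}\ge 1$.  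This works, but it is more delicate and requires tracking the index order in Lemma~\ref{Inversion Formula} carefully.  The paper's ``choose $A$ symmetric'' trick is the shortcut that replaces your third paragraph with a two-line estimate.
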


\begin{proof}
The lemma is a purely combinatorial statement about~$W$, and in particular does not depend on the choice of $A$.  
Thus, to prove the lemma, we are free to choose $A$ to be symmetric, so that we can apply Lemma~\ref{coeff at least 1}.
Furthermore, for $A$ symmetric, each root equals the corresponding co-root, and $A$ is the matrix of the bilinear form~$K$.

Let $(p_1,\cdots,p_k)$ be a linear extension of $\Omega|_P$ and let $(q_1,\cdots,q_n)$ be a linear extension of $\Omega|_Q$.
The hypothesis of the lemma is that there exist $i$, $j$, $l$ and $m$ with $1\le i \leq j\le k$ and $1\le l \leq m\le n$ such that there is a directed path from $p_j$ to $p_i$ in $P$, followed by an edge $p_i \to q_m$, and, similarly a directed path from $q_m$ to $q_l$ in $Q$ followed by an edge $q_l \to p_j$. 
The reflection $t=p_1p_2\cdots p_j\cdots p_2p_1$ is an inversion of $c(\Omega,P)$ and the reflection $u=q_1q_2\cdots q_m\cdots q_2q_1$ is an inversion of $c(\Omega,Q)$.
To prove the lemma, it is enough to show that no element of~$W$ can have both $t$ and $u$ in its inversion set. 

The positive root $\beta_t$ is a positive linear combination of simple roots $\set{\alpha_s:s\in P}$.
By Lemma~\ref{coeff at least 1},  $\alpha_{p_i}$ and $\alpha_{p_j}$ both appear with coefficient at least $1$ in $\beta_t$.
Similarly, $\beta_u$ is a positive linear combination of $\set{\alpha_s:s\in Q}$ in which $\alpha_{q_l}$ and $\alpha_{q_m}$ both appear with coefficient at least $1$.

Since $P$ and $Q$ are disjoint, we have $A_{rs} \leq 0$ for any $r \in P$ and $s \in Q$.
Also $K(\alpha_{p_j},\alpha_{q_l})\neq 0$, since $q_l\to p_j$, and thus $K(\alpha_{p_j},\alpha_{q_l})\le -1$.
Similarly, $K(\alpha_{p_i},\alpha_{q_m})\le -1$.
Thus
\[K( \beta_t, \beta_u ) \leq K( \alpha_{p_j}, \alpha_{q_l} ) +  K( \alpha_{p_i}, \alpha_{q_m} ) \leq -2.\]
Now $t$ acts on $\beta_u$ by $t \cdot \beta_u=\beta_u - K(\beta_t\ck, \beta_u) \beta_t=\beta_u - K(\beta_t,\beta_u) \beta_t$, and $u$ acts on $\beta_t$ similarly.
Thus $t$ and $u$ generate a reflection subgroup of infinite order.
Therefore, there are infinitely many roots in the positive span of $\beta_t$ and $\beta_u$.
In particular, by Lemma~\ref{PilkLemma}, no element of~$W$ can have both $t$ and $u$ as inversions.
\end{proof}

We now complete the proof of Theorem~\ref{antimatroid} by showing that $(S, L)$ satisfies condition $(3')$.
So let~$w \in W$, let $I\in L$ and let $a,a'\in S\setminus I$ such that $J=I \cup \{ a \}$ and $J'=I \cup \{ a' \}$ are both in $L$.

Our first major goal is to establish that $J \cup J'$ is $\Omega$-acyclic. 
This part of the argument is illustrated in Figure~\ref{Poset}.
Let $I_1$ be the set of all elements of $I$ lying on directed paths from $a$ to $a'$, and let $I_2$ be the set of all elements of $I$ lying on directed paths from $a'$ to $a$. 
Once we show that $J \cup J'$ is $\Omega$-acylic, we will know that either $I_1$ or $I_2$ is empty, but we don't know this yet. 
However, it is easy to see that $I_1$ and $I_2$ are disjoint, as an element common to both would lie on a cycle in~$J$.

Set $U = \{ u \in I : u \not \geq_J a \mbox{ and } u \not \geq_{J'} a' \}$.
The reader may find it easiest to follow the proof by first considering the special case where $U$ is empty.
Note that $U$ is disjoint from $I_1$ and $I_2$.

\begin{figure}
\psfrag{a}{\LARGE$a$}
\psfrag{b}{\LARGE$a'$}
\psfrag{I}{\LARGE$I_2$}
\psfrag{J}{\LARGE$I_1$}
\psfrag{U}{\LARGE$U$}
\centerline{\scalebox{0.70}{\includegraphics{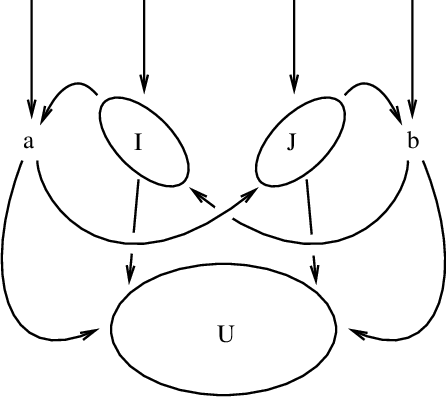}}}
\caption{The various subsets of $S$ occurring in the proof of $(3')$.} \label{Poset}
\end{figure}

Let $V_1 = U \cup I_1 \cup \{ a \}$. 
We claim that $V_1$ is a (lower) order ideal of $\Omega|_J$.
It is obvious that $U$ is an order ideal.
If $i \in I_1 \cup \{ a \}$, and $j <_{J} i$, then $j \in I_1$ if $j \geq_J a'$ and $j \in U$ otherwise. 
So $V_1$ is an order ideal of $\Omega|_J$ and we have $w \geq c(\Omega, J) \geq c(\Omega, V_1)$.
Moreover, since $U$ is an order ideal in $\Omega|_{V_1}$, we have $c(\Omega, V_1) = c(\Omega, U) c(\Omega, I_1 \cup \{a \})$ and thus $c(\Omega, U)^{-1} w \geq c(\Omega, I_1 \cup \{a \})$ by many applications of Lemma~\ref{AboveBelowLite}. 
Similarly, $c(\Omega, U)^{-1} w \geq c(\Omega, I_2 \cup \{ a' \})$.

Suppose (for the sake of contradiction) that $J\cup J'$ is not $\Omega$-acyclic. 
Since $J$ and $J'$ are $\Omega$-acyclic, there must exist both a directed path from $a$ to $a'$ and a directed path from $a'$ to $a$ in $J \cup J'$.
Applying Lemma~\ref{No Chains} with $P = I_1 \cup \{ a \}$, $p=a$, $Q = I_2 \cup \{ a' \}$ and $q = a'$, we deduce that no element of $W$ is greater than both $c(\Omega, P)$ and $c(\Omega, Q)$.
This contradicts the computations of the previous paragraph, so $J \cup J'$ is acyclic.

Choose a linear extension of $\Omega|_{J \cup J'}$. 
Without loss of generality, we may assume that $a$ precedes $a'$; let our linear ordering be $b_1$, $b_2$, \dots, $b_r$, $a$, $c_1$, $c_2$, \dots, $c_s$, $a'$, $d_1$, $d_2$, \dots, $d_t$. 
We can now apply Lemma~\ref{tea room} to the sequences $( b_1, b_2, \ldots, b_r, a )$ and $(b_1, b_2, \ldots, b_r, c_1, c_2, \ldots, c_s, a', d_1, d_2, \ldots, d_t)$ and deduce that $J \cup J'$ is in $L$.
This completes our proof of $(3')$.

\begin{remark}
It would be interesting to connect the antimatroid $(S,L(w,\Omega))$ to the antimatroids occurring in~\cite{Drew}.
\end{remark}

\section{$\Omega$-sortability and $\pidown^\Omega$}\label{main}
In this section, we define $\Omega$-sortable elements and the map $\pidown^\Omega$, review the definition of $c$-sortable elements and the map $\pidown^c$, and show how the $\Omega$- and $c$-versions of these concepts are related.
We then prove Proposition~\ref{Maximal Sortable} and Theorems~\ref{Sublattice} and~\ref{QuotientLattice}. 

For any $w\in W$, we appeal to Proposition~\ref{Maximal Coxeter} to inductively define a sequence of elements of~$W$ as follows:  Let $w_1=w$.
When $w_i$ has been defined, let $J_i=J(w_i,\Omega)$, and define $w_{i+1}=\left[c(\Omega,J_i)\right]^{-1}w_i$.
Since $\ell(w_{i+1})=\ell(w_i)-|J_i|$, the $J_i$ are empty for $i$ sufficiently large. 
It is clear that $J(v,\Omega)=\emptyset$ if and only if $v=e$, so we see that $w_i=e$ for $i$ sufficiently large. 
Thus, the infinite product $c(\Omega,J_1) c(\Omega,J_2) \cdots$ is defined, and equal to~$w$. 
For each $i$, fix a total order on $J_i$ that extends $\Omega|_{J_i}$.
In the expression $c(\Omega,J_1) c(\Omega,J_2) \cdots$, replace each $c(\Omega,J_i)$ by the reduced word for $c(\Omega,J_i)$ given by listing the elements of $J_i$ according to the total order.   
We thus obtain a reduced word called \newword{an $\Omega$-sorting word} for~$w$. 

We say that~$w$ is \newword{$\Omega$-sortable} if $J_1 \supseteq J_2 \supseteq J_3 \supseteq \cdots$. 
Observe that, if~$w$ is $\Omega$-sortable, then~$w$ automatically lies in $W_J$ for some $\Omega$-acyclic~$J$. 

We now review the definition of $c$-sortable elements in~$W$, where $c$ is a Coxeter element of~$W$. 
Fix a reduced word $s_1s_2\cdots s_n$ for~$c$ and define an infinite word 
\[(s_1\cdots s_n)^\infty=s_1s_2\cdots s_n|s_1s_2\cdots s_n|s_1s_2\cdots s_n|\ldots\]
The symbols ``$|$'' serve only to mark the boundaries between repetitions of the word $s_1s_2\cdots s_n$.
For each $w\in W$, the \newword{$(s_1\cdots s_n)$-sorting word} for $w\in W$ is the lexicographically first (as a sequence of positions in $(s_1\cdots s_n)^\infty$) subword of $(s_1\cdots s_n)^\infty$ that is a reduced word for~$w$. 
The $(s_1\cdots s_n)$-sorting word defines a sequence of subsets of~$S$:
Each subset is the set of letters of the $(s_1\cdots s_n)$-sorting word occurring between adjacent dividers. 

A $(s_1\cdots s_n)$-sorting word for~$w$ is also called a \newword{$c$-sorting word} for~$w$.
Thus there are typically several $c$-sorting words for~$w$, but exactly one $(s_1\cdots s_n)$-sorting word for~$w$ for each reduced word $s_1s_2\cdots s_n$ for $c$.
Each $c$-sorting word for~$w$ defines the same sequence of subsets.
A \newword{\mbox{$c$-sortable element}} of~$W$ is an element whose a $c$-sorting word defines a sequence of subsets which is weakly decreasing under inclusion.

\begin{remark}
Let~$w$ be an element of~$W$. We define $F(w, \Omega)$ to be the generating function $\sum x_1^{|J_1|} x_2^{|J_2|} \cdots x_r^{|J_r|}$, where the sum is over all length-additive factorizations $w = c(\Omega,J_1) c(\Omega,J_2) \cdots c(\Omega,J_r)$.   
(It is permitted that some $J_i$ be empty, and $r$ is permitted to vary.)
If $W$ is of type $A_n$, and $\Omega$ is oriented as $1 \to 2 \to \cdots \to n$, this is the Stanley symmetric function~\cite{Sta}, as shown in \cite[Proposition~5]{Lam}.
If $W$ is of type $\tilde{A}_n$, and $\Omega$ is the cyclic orientation, this is (essentially by definition) Lam's affine generalization of the Stanley symmetric functions.
The $c$- (respectively $\Omega$)-sorting word for $w$ corresponds to the unique dominant monomial constructed in~\cite[Section 4]{Sta} (respectively,~\cite[Theorem 13]{Lam}).
It would be interesting to see whether something could be said about $F(w,\Omega)$ for other groups and for other orientations of the diagrams.  
\end{remark}

If~$\Omega$ is acyclic, then $\Omega$-sortability coincides with $c(\Omega)$-sortability.
To understand why, it is enough to prove the following proposition.

\begin{prop}\label{SameAsAcyclic} 
If the orientation~$\Omega$ is acylic, then any $c(\Omega)$-sorting word for $w\in W$ is an $\Omega$-sorting word for $w$.
\end{prop}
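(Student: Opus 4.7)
The plan is to identify the ``blocks'' of a $c(\Omega)$-sorting word for $w$ with the subsets $J_1, J_2, \ldots$ produced by the $\Omega$-sorting construction. Fix a reduced word $s_1 s_2 \cdots s_n$ for $c(\Omega)$; such a word is precisely a linear extension of the poset $\Omega$. The $(s_1 \cdots s_n)$-sorting word for $w$ determines a sequence of subsets $K_1, K_2, \ldots$ of $S$, where $K_i$ collects the letters strictly between the $i$-th and $(i{+}1)$-st dividers. Within each block the letters appear in the order inherited from $s_1, s_2, \ldots, s_n$, which is automatically a linear extension of $\Omega|_{K_i}$. So once I show that $K_i = J_i$ for every $i$, the given $c$-sorting word will coincide with one of the permissible $\Omega$-sorting words for $w$.

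By induction on $\ell(w)$ (the case $w = e$ being trivial), it suffices to prove $K_1 = J(w, \Omega) = J_1$. Once this is established, the tail of the $c$-sorting word obtained by deleting the first block is the $(s_1 \cdots s_n)$-sorting word for $w_2 := c(\Omega, J_1)^{-1} w$, because any lex-earlier reduced subword of $(s_1 \cdots s_n)^\infty$ for $w_2$ would, when prepended by the first block, yield a lex-earlier reduced subword for $w$. Since $\ell(w_2) < \ell(w)$, the inductive hypothesis applies to $w_2$.

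To prove $K_1 = J_1$, I use the greedy characterization of $K_1$: processing $s_1, s_2, \ldots, s_n$ in order, the letter $s_i$ is placed in $K_1$ exactly when $s_i$ is a left descent of the element that remains after the letters already placed in $K_1$ have been cancelled. In particular, $c(\Omega, K_1) \le w$, so $K_1 \in L(w, \Omega)$; since $J_1$ is the unique maximum of the antimatroid $L(w, \Omega)$ (Theorem~\ref{antimatroid} together with Proposition~\ref{anti max}), we have $K_1 \subseteq J_1$. For the reverse inclusion, suppose some element of $J_1 \setminus K_1$ exists, and let $s_i$ be the one of smallest index. By minimality of $i$, the sets $K_1 \cap \{s_1, \ldots, s_{i-1}\}$ and $J_1 \cap \{s_1, \ldots, s_{i-1}\}$ agree; call this common set $K'$. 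Because $s_1, \ldots, s_n$ is a linear extension of $\Omega$, the set $K' \cup \{s_i\} = J_1 \cap \{s_1, \ldots, s_i\}$ is a lower order ideal of the sub-poset $\Omega|_{J_1}$. Listing elements of $J_1$ with this ideal first (still respecting $\Omega|_{J_1}$) exhibits a length-additive factorization $c(\Omega, J_1) = c(\Omega, K' \cup \{s_i\}) \cdot c(\Omega, J_1 \setminus (K' \cup \{s_i\}))$, so $c(\Omega, K' \cup \{s_i\}) \le c(\Omega, J_1) \le w$. Equivalently, $s_i$ is a left descent of $c(\Omega, K')^{-1} w$, contradicting the fact that the greedy process skipped $s_i$.

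The substantive step is the equality $K_1 = J_1$: the inclusion $K_1 \subseteq J_1$ drops out of the antimatroid structure, while the reverse inclusion requires the order-ideal observation, which is the mechanism by which the maximality of $J_1$ in $L(w, \Omega)$ gets translated into the greedy optimality defining $K_1$.
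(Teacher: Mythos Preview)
Your proof is correct and follows essentially the same approach as the paper's: both argue by induction on $\ell(w)$, reducing to the equality $K_1 = J_1$, and both obtain $K_1 \subseteq J_1$ from the maximality of $J(w,\Omega)$ and the reverse inclusion from the lex-minimality of the $c$-sorting word. Your version is more explicit about the greedy mechanism and the first-discrepancy argument, while the paper compresses this into the observation that a reduced word for $w$ beginning with $c(\Omega,J_1)$ would be lexicographically earlier, but the underlying logic is the same.
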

\begin{proof}
Let $J_1,J_2,\ldots$ be the sequence of subsets of $S$ arising in the definition of the $\Omega$-sorting word for~$w$.
Fix a reduced word $s_1\cdots s_n$ for $c$, and let $I_1,I_2,\ldots$ be the sequence of subsets arising from the definition of the $(s_1\cdots s_n)$-sorting word for~$w$.
The content of the proposition is that these two sequences coincide.  
The definition of $I_1$ assures that $c(\Omega,I_1)\le w$, so $I_1\subseteq J_1$, by the definition of $J_1$.
If $I_1\subsetneq J_1$ then any word starting with a reduced word for $c(\Omega,J_1)$ is a lexicographically earlier subword of $(s_1\cdots s_n)^\infty$ than the $(s_1\cdots s_n)$-sorting word for~$w$, which omits the letters in $J_1\setminus I_1$.
Thus $I_1=J_1$.

Now $J_2,J_3,\ldots$ and $I_2,I_3,\ldots$ are the sequences arising from the $\Omega$- or $c$-sorting word for $c(\Omega,J_1)^{-1}w$.
By induction on the length of~$w$, these sequences coincide.
\end{proof}

The next proposition says that, when~$\Omega$ is not acyclic, the notions of $\Omega$-sortability and $c$-sortability are related as described in the introduction.
\begin{prop}\label{ParaAndAcyclic}
Let $w\in W$.
Then~$w$ is $\Omega$-sortable if and only if~$w$ is a $c(\Omega,J(w,\Omega))$-sortable element of $W_{J(w,\Omega)}$.
\end{prop}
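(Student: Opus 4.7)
The plan is to reduce to the acyclic setting by showing that, whenever $w$ lies in a standard parabolic subgroup $W_J$ with $J$ an $\Omega$-acyclic subset of $S$, the recursive construction of $J_1, J_2, \ldots$ attached to $w$ in $W$ using $\Omega$ coincides with the corresponding construction for $w$ regarded as an element of $W_J$ using the acyclic orientation $\Omega|_J$. Combined with Proposition~\ref{SameAsAcyclic} applied to $(W_J,\Omega|_J)$, which identifies $\Omega|_J$-sortability in $W_J$ with $c(\Omega,J)$-sortability in $W_J$, this will yield the proposition.

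The key lemma to establish is: if $w\in W_J$ and $J$ is $\Omega$-acyclic, then $J(w,\Omega)\subseteq J$. To see this, suppose $J'\in L(w,\Omega)$. For each $s\in J'$, the reduced-word characterization of weak order gives $s\le c(\Omega,J')\le w$, so $s\in\inv(w)$. But $w\in W_J$ forces $\inv(w)\subseteq T\cap W_J$, and a simple reflection belongs to $W_J$ only if it lies in $J$. Hence $J'\subseteq J$, and in particular $J(w,\Omega)\subseteq J$. Since $\Omega$-acyclic subsets of $J$ coincide with $\Omega|_J$-acyclic subsets of $J$, since $c(\Omega,J')=c(\Omega|_J,J')$ for $J'\subseteq J$, and since weak order on $W_J$ agrees with the restriction of weak order on $W$, this lemma further shows that the maximal element $J(w,\Omega)$ of $L(w,\Omega)$ equals the maximal element of the analogous collection $L_{W_J}(w,\Omega|_J)$ computed inside $W_J$.

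With the lemma in hand, start the recursion at $w_1=w\in W_J$ with $J$ an $\Omega$-acyclic subset of $S$. Then $J_1=J(w_1,\Omega)\subseteq J$, so $c(\Omega,J_1)\in W_J$, so $w_2=c(\Omega,J_1)^{-1}w_1\in W_J$, and by induction every $w_i$ remains in $W_J$ and every $J_i$ is a subset of $J$ that agrees with the $i$th set produced when the same recursion is run inside $W_J$ with orientation $\Omega|_J$. Therefore $w$ is $\Omega$-sortable in $W$ if and only if $w$ is $\Omega|_J$-sortable in $W_J$, which by Proposition~\ref{SameAsAcyclic} is equivalent to $w$ being $c(\Omega,J)$-sortable in $W_J$.

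Finally, assemble the two directions. If $w$ is $\Omega$-sortable, the factorization $w=c(\Omega,J_1)c(\Omega,J_2)\cdots$ with $J_1\supseteq J_2\supseteq\cdots$ places $w$ in $W_{J_1}=W_{J(w,\Omega)}$, and applying the preceding paragraph with $J=J(w,\Omega)$ gives $c(\Omega,J(w,\Omega))$-sortability in $W_{J(w,\Omega)}$. Conversely, if $w\in W_{J(w,\Omega)}$ is $c(\Omega,J(w,\Omega))$-sortable, then $w$ is $\Omega|_{J(w,\Omega)}$-sortable in $W_{J(w,\Omega)}$, and the matching of recursions forces $w$ to be $\Omega$-sortable in $W$. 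The main obstacle is the key lemma above; once $J(w,\Omega)\subseteq J$ is in place, everything else is a direct comparison of the two recursions and an appeal to Proposition~\ref{SameAsAcyclic}.
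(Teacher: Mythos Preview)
Your overall strategy is sound and essentially matches the paper's (very brief) argument, but the proof of your key lemma contains an error. You claim that for every $s\in J'$ one has $s\le c(\Omega,J')$ in weak order, but this is false: in $S_3$ with $J'=\{s_1,s_2\}$ and $c(\Omega,J')=s_1s_2$, the generator $s_2$ does not satisfy $s_2\le s_1s_2$, since $s_2$ is not an inversion of $s_1s_2$. In general $s\le c(\Omega,J')$ holds only when $s$ is minimal in the poset $\Omega|_{J'}$, so your conclusion ``$s\in\inv(w)$ for every $s\in J'$'' is not valid.

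The lemma itself is correct, and the fix is immediate from facts already recorded in the paper. Since $W_J$ is a lower order ideal in weak order and $c(\Omega,J')\le w\in W_J$, one gets $c(\Omega,J')\in W_J$. The support of $c(\Omega,J')$ (the set of simple generators appearing in any reduced word for it) is exactly $J'$, while every element of $W_J$ has support contained in $J$; hence $J'\subseteq J$, and in particular $J(w,\Omega)\subseteq J$. With this correction in place, the remainder of your argument---matching the $\Omega$-recursion in $W$ with the $\Omega|_J$-recursion in $W_J$ and then invoking Proposition~\ref{SameAsAcyclic}---goes through, and indeed makes explicit what the paper compresses into ``the argument is easily reversed.''
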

\begin{proof}
If~$w$ is $\Omega$-sortable, then every letter in its $\Omega$-sorting word is contained in $J(w,\Omega)$, and thus $w\in W_{J(w,\Omega)}$.
Furthermore,~$w$ is $\Omega|_J$-sortable and thus $c(\Omega,J(w,\Omega))$-sortable by Proposition~\ref{SameAsAcyclic}.
The argument is easily reversed.
\end{proof}

We now give the recursive definition of $\pidown^\Omega$.
For any $w \in W$, set $J:=J(w,\Omega)$ and define $\pidown^{\Omega}(w)=c(\Omega,J) \pidown^{\Omega}\bigl[(c(\Omega,J))^{-1}w_J\bigr]$.
Setting $\pidown^{\Omega}(e)=e$, this recursion terminates.

Proposition~\ref{Maximal Sortable} is the assertion that $\pidown^{\Omega}(w)$ is the unique maximal $\Omega$-sortable element below~$w$ in the weak order.
In order to prove Proposition~\ref{Maximal Sortable}, we will appeal to the acyclic case of Theorem~\ref{o p}, which was proved as \cite[Theorem~6.1]{typefree}.
The latter theorem is a statement about a map $\pidown^c$, whose definition we now review.

Fix a reduced word $s_1 s_2 \cdots s_n$ for $c$ and let $w \in W$. 
Let $\Omega$ be the corresponding acyclic orientation of $\Gamma$.
The definition of $\pidown^c(w)$ in \cite[Section~6]{typefree} was inductive, stepping through one letter of $(s_1 s_2 \cdots s_n)^{\infty}$ at a time.
For our present purposes, it is easier to perform each $n$ steps at once. 
The definition from~\cite{typefree} is then equivalent to the following:  
Setting $J_0 = \emptyset$, we will successively construct subsets $J_1$, $J_2$, \ldots, $J_n$ with $J_i \subseteq [i]$.
If $w \geq c(\Omega,J_{i-1}) s_i$, then $J_i = J_{i-1} \cup \{ i \}$; otherwise, $J_i = J_{i-1}$. 
Set $J = J_n$. Then $\pidown^c(w) = c(\Omega,J) \cdot  \pidown^{c}\left( (c(\Omega,J)^{-1} w)_J \right)$. 

The base case of the inductive proof of Proposition~\ref{SameAsAcyclic} establishes that $J_n=J(w,\Omega)$.
Thus $\pidown^{c(\Omega)}$ coincides with $\pidown^\Omega$ when~$\Omega$ is acyclic.
Furthermore, when~$\Omega$ is not necessarily acyclic, $\pidown^{\Omega}(w)=\pidown^{c(\Omega,J(w,\Omega))}(w_{J(w, \Omega)})$.

\begin{proof}[Proof of Proposition~\ref{Maximal Sortable}]
Let $w\in W$, abbreviate $J(w, \Omega)$ to~$J$ and abbreviate $c(\Omega,J)$ to $c$. 
We need to show that $\pidown^c(w_J)$ is the unique maximal $\Omega$-sortable element below~$w$ in the weak order.
We have $w \geq w_J$ and, by the acyclic case of Theorem~\ref{o p}, $w_J \geq \pidown^c(w_J)$. 
Also, $\pidown^c(w_J)$ is $c$-sortable, and hence $\Omega$-sortable by Proposition~\ref{ParaAndAcyclic}.  
We now must check that, if $v$ is $\Omega$-sortable and $v \leq w$, then $v \leq \pidown^c(w_J)$.
Since $v$ is $\Omega$-sortable, we deduce that $v \in W_{J(v, \Omega)}$.
But $w \geq v \geq c(\Omega,J(v, \Omega))$, so $J(w, \Omega) \supseteq J(v, \Omega)$ and $v \in W_J$.
Now Proposition~\ref{wJ o p} says that $w_J \geq v_J=v$ and, appealing again to the acyclic case of Theorem~\ref{o p}, $\pidown^c(w_J) \geq \pidown^c(v) =v$. 
\end{proof}

Now that we have proven Proposition~\ref{Maximal Sortable}, we also have, as corollaries, Theorem~\ref{o p} and Propositions~\ref{idem} and~\ref{downward}.
We also obtain the following proposition by reduction to the acyclic case, which was proven as \cite[Proposition~3.13]{typefree}.

\begin{prop} \label{Sortable restriction}
Let~$v$ be an $\Omega$-sortable element of~$W$ and let $I$ be any subset of $S$. Then $v_I$ is $\Omega|_I$-sortable.
\end{prop}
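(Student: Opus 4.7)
The plan is to reduce to the acyclic case, namely \cite[Proposition~3.13]{typefree}, by first repackaging the hypothesis via Proposition~\ref{ParaAndAcyclic} and then exhibiting an explicit witness for the $\Omega|_I$-sortability of $v_I$.

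First I would set $J = J(v, \Omega)$. Proposition~\ref{ParaAndAcyclic} then tells us that $v \in W_J$ and that $v$, viewed as an element of $W_J$, is $c(\Omega, J)$-sortable. The natural candidate witness for the $\Omega|_I$-sortability of $v_I$ is the subset $J \cap I \subseteq I$. It is $\Omega|_I$-acyclic because it is a subset of the $\Omega$-acyclic set $J$, and the Coxeter element it determines is $c(\Omega|_I, J \cap I) = c(\Omega, J \cap I)$, obtained from $c(\Omega, J)$ by restricting to those simple generators that lie in $I$.

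The one small technical step is the identification $v_I = v_{J \cap I}$. Using the defining property $\inv(w_K) = \inv(w) \cap W_K$ of the parabolic projection together with the standard identity $W_I \cap W_J = W_{I \cap J}$, the containment $\inv(v) \subseteq W_J$ (a consequence of $v \in W_J$) gives
\[
\inv(v_I) \;=\; \inv(v) \cap W_I \;=\; \inv(v) \cap W_I \cap W_J \;=\; \inv(v) \cap W_{I \cap J} \;=\; \inv(v_{I \cap J}).
\]
Since $v_{I \cap J}$ already lies in $W_I$, equality of inversion sets forces $v_I = v_{J \cap I}$.

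With this in hand, I would apply \cite[Proposition~3.13]{typefree} inside the parabolic subgroup $W_J$ to the $c(\Omega, J)$-sortable element $v$ and the subset $J \cap I$ of the simple generators of $W_J$, concluding that $v_{J \cap I}$ is $c(\Omega, J \cap I)$-sortable in $W_{J \cap I}$. Since $v_I = v_{J \cap I}$, the set $J \cap I$ is precisely the witness needed to declare $v_I$ to be $\Omega|_I$-sortable in the sense of the definition given in the introduction. I do not expect any serious obstacle here: once the identification $v_I = v_{J \cap I}$ is made, the proposition is essentially just the acyclic case of itself, applied one layer deeper inside a parabolic subgroup.
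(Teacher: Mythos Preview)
Your proof is correct and follows essentially the same route as the paper: set $J=J(v,\Omega)$, use $v_I=v_{I\cap J}$, and invoke the acyclic case \cite[Proposition~3.13]{typefree} inside $W_J$. The paper simply asserts the identity $v_I=v_{I\cap J}$ without justification, whereas you supply the inversion-set argument; otherwise the arguments are the same.
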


\begin{proof}
Set $J = J(v,\Omega)$. 
So $v \in W_J$, $\Omega|_J$ is acyclic, and~$v$ is $\Omega|_J$-sortable. 
Since $v_I = v_{I \cap J}$, the acyclic case of the proposition says that $v_I$ is $\Omega|_{I \cap J}$-sortable, so it is $\Omega|_I$-sortable.
\end{proof}
 
A more difficult reduction to the acyclic case is needed to prove Theorem~\ref{Sublattice}.
The acyclic case was proven as \cite[Theorem~7.1]{typefree}.

\begin{proof}[Proof of Theorem~\ref{Sublattice}] 
First, suppose that $A$ is a nonempty set of $\Omega$-sortable elements. 
By Proposition~\ref{ParaAndAcyclic}, every element $a$ of $A$ lies in a parabolic subgroup $W_{J(a, \Omega)}$ where $J(a, \Omega)$ is acyclic. 
Let $J=\bigcap_{a \in A} J(a, \Omega)$.  
Since each $W_{J(a,\Omega)}$ is a lower order ideal, the element $\Meet A$ lies in $W_J$.
Thus $\Meet A=(\Meet A)_J$, which equals $\Meet_{a \in A} a_J$ by Proposition~\ref{para hom}.   
By Proposition~\ref{Sortable restriction}, every $a_J$ is $c(\Omega,J)$-sortable so, by the acyclic case, $\Meet_{a \in A} a_J$ is also $c(\Omega, J)$-sortable and thus $\Omega$-sortable by Proposition~\ref{ParaAndAcyclic}. 

Now, suppose $A$ is a set of $\Omega$-sortable elements such that $\Join A$ exists.
Since $A$ is contained in the interval below $\Join A$, in particular $A$ is finite.
Thus it is enough to consider the case where $A$ only has two elements, $u$ and $v$.
Let $I=J(u, \Omega)$ and let $J=J(v,\Omega)$. 
Now $u\ge c(\Omega,I)$ and $v\ge c(\Omega,J)$.

We will show that $J(u \join v, \Omega) = I \cup J$. 
As $u \join v \geq u \geq c(\Omega,I)$, Proposition~\ref{Maximal Coxeter} tells us that $J(u \join v, \Omega) \supseteq I$.  
By similar logic, $J(u \join v, \Omega) \supseteq J$, so $J(u \join v, \Omega) \supseteq I \cup J$.
On the other hand,  $u\in W_I$ and $v\in W_J$, so $u_{I\cup J}=u$ and $v_{I\cup J}=v$.
By Proposition~\ref{para hom}, $u\join v=u_{I\cup J}\join v_{I\cup J}=(u\join v)_{I\cup J}$, so $u\join v\in W_{I\cup J}$ and $J(u \join v, \Omega) \subseteq I \cup J$..
We now know that $J(u \join v, \Omega)= I \cup J$.
In particular, $I \cup J$ is $\Omega$-acyclic.

Now, $u$ and $v$ are both $\Omega|_{I \cup J}$-sortable elements of $W_{I \cup J}$. By the acyclic case, we deduce that $u \join v$ is $\Omega|_{I \cup J}$-sortable, and thus $\Omega$-sortable.
\end{proof}

Finally, we prove Theorem~\ref{QuotientLattice}, which states that $\pidown^\Omega$ factors over meets and joins.
We will appeal to the acyclic case of Theorem~\ref{QuotientLattice}, proved as \cite[Theorem~7.3]{typefree}.

\begin{proof}[Proof of Theorem~\ref{QuotientLattice}]
The proof of the assertion about meets exactly follows the argument in \cite[Theorem~7.3]{typefree} for the acyclic case, except that
 \cite[Theorem~6.1]{typefree} and \cite[Theorem~7.1]{typefree} are replaced by Theorems~\ref{o p} and~\ref{Sublattice}.

To prove the assertion about joins, set $J = J(\Join A,\Omega)$. 
Now $\pidown^{\Omega} \left( \Join A \right) = \pidown^{\Omega} \left( \left( \Join A \right)_J \right)$ which, by Proposition~\ref{para hom}, is $\pidown^{\Omega} \left( \Join A_J \right)$.   
The latter equals $\pidown^{\Omega|_J} \left( \Join A_J \right)$ which, by the acyclic case of the theorem, equals $\Join \pidown^{\Omega|_J}(A_J)$. 
Now, for each $a \in A$, we have $a \leq \Join A$, so $J(a, \Omega) \subseteq J$. 
Therefore, for each $a \in A$, we have $\pidown^{\Omega|_J}(a_J)=\pidown^{\Omega}(a)$. 
Thus $\Join \pidown^{\Omega|_J}(A_J)=\Join\pidown^{\Omega}(a)$ and, stringing together all of the equalities we have proved, we obtain the result.
\end{proof}

\section{The fibers of $\pidown^c$} \label{sec fibers} 
In this section, we describe the fibers of $\pidown^\Omega$ in terms of polyhedral geometry.
We begin by reviewing the analogous description in the acyclic case.

The \newword{dominant chamber} is the full-dimensional simplicial cone
\[D=\bigcap_{s \in S} \set{x^*\in V^*: \br{x^*,\alpha_s}\ge 0}\]
in $V^*$.
The map $w\mapsto wD$ takes~$W$ bijectively to a collection of $n$-dimensional cones with pairwise disjoint interiors.

In \cite[Section~5]{typefree}, a linearly independent set $C_c(v)$ of roots is defined recursively for each $c$-sortable element $v$.
More specifically, we define $n$ linearly independent roots $C_c^r(v)$, one for each $r \in S$, and set $C_c(v)=\{ C_c^r(v): \ r \in S \}$.
The set $\Cone_c(v)$, defined by $\bigcap_{r \in S} \left\{ x^*\in V^* : \langle x^*, C_c^r(v) \rangle \geq 0 \right\}$, is thus a full-dimensional, simplicial, pointed cone in $V^*$.
By \cite[Theorem~6.3]{typefree}, these cones characterize the fibers of $\pidown^c$ in the sense that $\pidown^c(w)=v$ if and only if $wD$ lies in $\Cone_c(v)$.

To generalize $C_c(v)$ to the cyclic setting, we imitate a non-recursive characterization of $C_c^r(v)$ which appears as~\cite[Proposition~5.1]{typefree}.
Fix a reduced word $s_1\cdots s_n$ for $c$, and let $a_1 a_2 \cdots a_k$ be the $(s_1\cdots s_n)$-sorting word for $v$.
Recall from Section~\ref{main} that the $(s_1 s_2 \cdots s_n)$-sorting word for $v$ is the lexicographically leftmost subword of $(s_1\cdots s_n)^\infty$ that is a reduced word for $v$.
In particular, $a_1 a_2 \cdots a_k$ is associated to a specific set of positions in $(s_1\cdots s_n)^\infty$.
For each $r\in S$, consider the first occurrence of $r$ in $(s_1\cdots s_n)^\infty$ that is \textbf{not} in a position occupied by $a_1 a_2 \cdots a_k$.
Let this occurrence of $r$ be between $a_i$ and $a_{i+1}$; we define $C_c^r(v) := a_1 a_2 \cdots a_i \alpha_r$.  

We now make a definition for the case where~$\Omega$ may contain cycles. 
Let $v$ be $\Omega$-sortable and let $J = J(\Omega, v)$. 
If $J \cup \{ r \}$ is $\Omega$-acyclic, define $C_{\Omega}^r(v)$ to be $C_{c(\Omega, J \cup \{ r \})}^r(v)$. 
If $J \cup \{ r \}$ is not $\Omega$-acyclic, then $C_{\Omega}^r(v)$ is undefined. 
Set $\Cone_{\Omega}(v) = \bigcap_{r} \left\{x^*\in V^* : \langle x^*, C_{\Omega}^r(v) \rangle \geq 0 \right\}$, where the intersection is over those $r$ such that $C_\Omega^r(v)$ is defined.

\begin{theorem}\label{pidown fibers}
Let $w \in W$.
Then $\pidown^{\Omega}(w)=v$ if and only if $wD \subseteq \Cone_{\Omega}(v)$. 
\end{theorem}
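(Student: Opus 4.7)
The plan is to reduce Theorem~\ref{pidown fibers} to the acyclic case (\cite[Theorem~6.3]{typefree}) by passing to parabolic subgroups $W_I$ for suitable $\Omega$-acyclic $I \subseteq S$. Two auxiliary observations do the heavy lifting. The first is a \emph{parabolic reduction for $\pidown^\Omega$}: if $I$ is $\Omega$-acyclic and $u \in W_I$, then $\pidown^\Omega(u) = \pidown^{c(\Omega,I)}(u)$ computed inside $W_I$, because $u \in W_I$ forces $L(u,\Omega) \subseteq 2^I$, hence $L(u,\Omega) = L(u,\Omega|_I)$, and the recursive definitions of Section~\ref{main} agree step by step. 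The second is a \emph{geometric reduction}: $wD \subseteq w_I D_I$ always, and for any root $\beta$ in the subsystem generated by $\{\alpha_s : s \in I\}$, the sign of $\langle \cdot, \beta \rangle$ on $wD$ agrees with its sign on $w_I D_I$; this uses $\inv(w_I) = \inv(w) \cap W_I$ together with the characterization of chambers by inversions.

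For the forward direction, suppose $\pidown^\Omega(w) = v$ and set $J := J(v,\Omega)$. For each $r \in S$ such that $I := J \cup \{r\}$ is $\Omega$-acyclic, the containments $v = v_I \leq w_I \leq w$, together with Theorem~\ref{o p} and Proposition~\ref{Maximal Sortable}, force $\pidown^\Omega(w_I) = v$. The parabolic reduction then gives $\pidown^{c(\Omega,I)}(w_I) = v$ in $W_I$, and the acyclic case applied in $W_I$ yields $w_I D_I \subseteq \Cone_{c(\Omega,I)}(v)$. The $r$-defining inequality $\langle w_I D_I, C_{c(\Omega,I)}^r(v) \rangle \geq 0$ lifts by the geometric reduction to $\langle wD, C_\Omega^r(v) \rangle \geq 0$; intersecting over all valid $r$ yields $wD \subseteq \Cone_\Omega(v)$.

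For the reverse direction, assume $wD \subseteq \Cone_\Omega(v)$ and set $v^* := \pidown^\Omega(w)$. By the forward direction $wD \subseteq \Cone_\Omega(v^*)$, so it remains to show $v = v^*$. The inequalities of $\Cone_\Omega(v)$ corresponding to $r \in J$ (where $J \cup \{r\} = J$ and $C_\Omega^r(v) = C_{c(\Omega,J)}^r(v)$), combined with the geometric reduction, give $w_J D_J \subseteq \Cone_{c(\Omega,J)}(v)$; the acyclic case produces $\pidown^{c(\Omega,J)}(w_J) = v$, so $v \leq w$ and hence $v \leq v^*$ by Proposition~\ref{Maximal Sortable}. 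Setting $J^* := J(v^*,\Omega) \supseteq J$, the forward-style reasoning applied to $v^* \leq w_{J^*} \leq w$ yields $\pidown^{c(\Omega,J^*)}(w_{J^*}) = v^*$ in $W_{J^*}$.

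The main obstacle is then to upgrade the hypothesis $wD \subseteq \Cone_\Omega(v)$ into $w_{J^*} D_{J^*} \subseteq \Cone_{c(\Omega,J^*)}(v)$; the acyclic case applied in $W_{J^*}$ will then force $\pidown^{c(\Omega,J^*)}(w_{J^*}) = v$, hence $v = v^*$. The crux is the identification $C_{c(\Omega,J^*)}^s(v) = C_\Omega^s(v)$ for every $s \in J^*$. Since $v \in W_J$, its $c$-sorting word avoids all letters outside $J$, and I expect that the sorting words of $v$ with respect to $c(\Omega,J)$, $c(\Omega, J \cup \{s\})$, and $c(\Omega,J^*)$ are literally the same sequence of letters; furthermore, the prefix of the sorting word up to the first free occurrence of $s$ contains the same sortable-word letters in each case, provided the three linear extensions are chosen compatibly. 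Once this identification is verified, the geometric reduction turns the inequalities of $\Cone_\Omega(v)$ into exactly the defining inequalities of $\Cone_{c(\Omega,J^*)}(v)$ for $w_{J^*} D_{J^*}$, completing the proof.
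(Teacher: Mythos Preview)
Your forward direction is essentially the paper's: both pass to $W_I$ with $I=J\cup\{r\}$, establish $\pidown^{c(\Omega,I)}(w_I)=v$ by the sandwich $v\le w_I\le w$, apply the acyclic theorem there, and lift the resulting inequality to $wD$ via $wD\subseteq w_I D_I$.

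The reverse direction diverges. After your first step (restricting to $r\in J$ and obtaining $\pidown^{c(\Omega,J)}(w_J)=v$), the paper does \emph{not} introduce $v^*=\pidown^\Omega(w)$. Instead it finishes by showing directly that $J(w,\Omega)=J$, which immediately gives $\pidown^\Omega(w)=\pidown^{c(\Omega,J)}(w_J)=v$. The argument is a one-root contradiction: if $r\in J(w,\Omega)\setminus J$, choose a linear extension $(a_1,\ldots,a_i,r,\ldots)$ of $\Omega|_{J(w,\Omega)}$ with $r$ the first letter outside $J$; then $C_\Omega^r(v)=a_1\cdots a_i\,\alpha_r$ is a positive root, while $w\ge c(\Omega,J(w,\Omega))\ge a_1\cdots a_i r$ forces this root to be an inversion of $w$, so $wD$ lies on the nonpositive side of the corresponding hyperplane, contradicting $wD\subseteq\Cone_\Omega(v)$.

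Your route through $J^*=J(v^*,\Omega)$ can be made to work, but the step you flag as ``I expect'' is a genuine obligation, not a formality. The identification $C_{c(\Omega,J^*)}^s(v)=C_\Omega^s(v)$ for every $s\in J^*$ does hold: choose a linear extension of $\Omega|_{J^*}$ and use its restriction to $J\cup\{s\}$ (this restriction \emph{is} a linear extension of $\Omega|_{J\cup\{s\}}$, since $J\cup\{s\}\subseteq J^*$); because $v\in W_J$, the two sorting words have identical block sequence $J=K_1\supseteq K_2\supseteq\cdots$, and with these compatible extensions the prefix of sorting-word letters preceding the first free $s$ is literally the same word in both computations. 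Note in particular that for $s\in J^*\setminus J$ the set $J\cup\{s\}$ is $\Omega$-acyclic (as a subset of $J^*$), so $C_\Omega^s(v)$ is defined. Once this is established, your sign-agreement reduction gives $w_{J^*}D_{J^*}\subseteq\Cone_{c(\Omega,J^*)}(v)$, and the acyclic case forces $v=v^*$.

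So your proof is completable, but the paper's direct $J(w,\Omega)=J$ argument is shorter and sidesteps the need to compare $C^s$ across different ambient parabolics.
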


Once again, the proof draws on the acyclic case, which was proved as \cite[Theorem~6.3]{typefree}.
The proof also requires two facts about the polyhedral geometry of Coxeter groups, which we now provide.
First, if $t$ is any reflection of~$W$, then $wD\subseteq\set{x^*\in V^* : \br{x^*, \beta_t}\le 0}$ if and only if $t$ is an inversion of~$w$.
Second, for any subset $J\subseteq S$, define
\[D_J=\bigcap_{s \in J} \set{x^*\in V^*: \br{x^*,\alpha_s}\ge 0}.\]
There is an inclusion $wD\subseteq w_JD_J$ for any $w\in W$.
For details, see \cite[Section~2.4]{typefree}, but notice that the set $D_J$ defined here corresponds to $P_J^{-1}(D_J)$ in the notation of~\cite{typefree}.
The map $P_J$ is a certain projection map which we do not need here.

\begin{proof}[Proof of Theorem~\ref{pidown fibers}]
We continue the notation $J=J(\Omega,v)$.
First, suppose that $\pidown^{\Omega}(w)$ is $v$.  
We need to show that $wD\subseteq \set{x^*\in V^* : \br{x^*, C_{\Omega}^r(v)}\ge 0 }$ for all $r\in S$ such that $J \cup \{ r \}$ is $\Omega$-acyclic. 
For such an $r$, the element $\pidown^{c(\Omega, J \cup\set{r})}(w_{J \cup \{ r \}})$ coincides with $\pidown^{\Omega}(w)=v$.
By the acyclic case of the theorem, $w_{J\cup\set{r}} D_{J\cup\set{r}}$ is contained in $\bigl\{x^*\in V^* : \br{x^*, C_{c(\Omega,J\cup\set{r})}^r(v)}\ge 0 \bigr\}$.
But $C_{c(\Omega,J \cup \{ r \})}^r(v)$ coincides with $C_{\Omega}^r(v)$, so $wD\subseteq w_{J\cup\set{r}} D_{J\cup\set{r}}$ $\subseteq\set{x^*\in V^* : \br{x^*, C_{\Omega}^r(v)}\ge 0 }$.  

Now, suppose that $wD \subseteq \Cone_{\Omega}(v)$. 
We first note that $v \in W_J$ and, for $r \in J$, that $C_{\Omega}^r(v) = C_{\Omega|_J}^r(v)$. 
So $\Cone_{\Omega}(v) \subseteq \Cone_{\Omega|_{J}}(v)=\Cone_{c(\Omega, J)}(v)$ and thus $w D \subseteq  \Cone_{c(\Omega, J)}(v)$.  
Every cone of the form $u D_J$ is either completely contained in $\Cone_{c(\Omega, J)}(v)$ or has its interior disjoint from $\Cone_{c(\Omega, J)}(v)$.
We conclude that $w_JD_J\subseteq \Cone_{c(\Omega, J)}(v)$. 
Then $w_J D \subseteq w_JD_J\subseteq \Cone_{c(\Omega, J)}(v)$, so $\pidown^{c(\Omega, J)}(w_J) = v$ by the acyclic case of the theorem. 
Since $\pidown^\Omega(w)=\pidown^{c(\Omega, J(w,\Omega))}(w_{J(w,\Omega)})$, we can complete the proof by showing that $J(w,\Omega)=J$. 
Set $J'=J(w,\Omega)$.

Since $w\ge w_J\ge \pidown^{c(\Omega, J)}(w_J) = v$, it is immediate from the definition of $J(w,\Omega)$ that $J'\supseteq J$.
Suppose, for the sake of contradiction, that $J' \neq J$. 
By definition, $J'$ is $\Omega$-acyclic. 
Choose a linear extension $(a_1,a_2,\ldots,a_i,r,\ldots)$ of $\Omega|_{J'}$, where $r$ is the first element not in~$J$.
Then $C_{\Omega}^r(v)$ is the positive root $a_1 a_2 \cdots a_i \alpha_r$ and hence, by the assumption that $wD \subseteq \Cone_{\Omega}(v)$, we have $wD\subseteq\set{x^*\in V^* : \br{x^*, a_1 a_2 \cdots a_i \alpha_r}\ge 0}.$
On the other hand, $w \geq c(\Omega, J') \geq a_1 a_2 \cdots a_i r$, by the definition of $J'$. 
Thus $wD\subseteq\set{x^*\in V^* : \br{x^*, a_1 a_2 \cdots a_i \alpha_r}\le 0}$, because $a_1 a_2 \cdots a_i r$ is the positive root associated to an inversion of~$w$.
But $wD$ is a full-dimensional cone, and this contradiction establishes that $J=J'$.
\end{proof}

In the acyclic case, \cite[Theorem~9.1]{typefree} states that the cones $\Cone_c(v)$ (and their faces) form a fan in $\Tits(W)$.
Roughly, the assertion is that these cones fit together nicely within the Tits cone, but not necessarily everywhere. 
(See \cite[Section~9]{typefree} for the precise definition.)  
We observe that the proof in~\cite{typefree} also works without alteration in the more general setting, replacing \cite[Theorem~7.3]{typefree} by its generalization Theorem~\ref{QuotientLattice}.

We now describe the shortcomings of Theorem~\ref{pidown fibers} for the purposes of cluster algebras.
In the acyclic case, the cones $\Cone_c(v)$ correspond to clusters in the corresponding cluster algebra. 
More specifically,~\cite{clusters} establishes that the extreme rays of $\Cone_c(v)$ are spanned by the $\g$-vectors of the cluster variables; this is also shown in~\cite{YZ} for cluster algebras of finite type.
(One interprets the $\g$-vectors as coefficients of an expansion in the basis of fundamental weights.)
The cone $\Cone_c(v)$ has $|S|$ extreme rays because it is a pointed simplicial cone, or equivalently, because $C_c(v)$ is a set of $|S|=\dim(V)$ linearly independent vectors.

By contrast, the cone $\Cone_\Omega(v)$ may have fewer than $|S|$ defining hyperplanes, since $C_{\Omega}^r(v)$ undefined when $\Omega|_{J \cup \{ r \}}$ has a cycle.
In~\cite{clusters}, it is shown that each $\Omega$-sortable element $v$ corresponds to a cluster.
Thus, in order to fill in the cluster algebras picture, we need to define vectors $C_\Omega^r(v)$, in the cases we presently leave undefined, so as to turn $\Cone_\Omega(v)$ into a pointed simplicial cone with the right extreme rays.
This appears to be a hard problem, for reasons we now describe.

By computing $g$-vectors, we can determine what the missing values of $C_\Omega^r(v)$ should be.
However, we sometimes obtain that $C_\Omega^r(v)$ should not be a real root!
Consider the $B$-matrix $\left( \begin{smallmatrix} 0 & 1 & -1 \\ -1 & 0 & 1 \\ 2 & -2 & 0 \end{smallmatrix} \right)$. 
The corresponding Cartan matrix $A$ defines a hyperbolic Coxeter group\footnote{Although this Coxeter group is of wild type, the $B$-matrix is mutation equivalent to the finite type $B_3$ matrix.} of rank $3$.  
Call the simple generators $p$, $q$ and $r$ in the order of the rows/columns of $A$, and consider the $\Omega$-sortable element $v=qrq$. 
The roots $C_{\Omega}^q(v)$ and $C_{\Omega}^r(v)$ are defined, and equal to $- \alpha_q - 2 \alpha_r$ and $\alpha_r$ respectively. 
By calculating $\g$-vectors, one can check that $C_{\Omega}^p(v)$ should be $\alpha_p + \alpha_q + 2 \alpha_r$. 
This is an imaginary root! 
It would require a significant modification of the definition of $C_\Omega$ to output an imaginary root. 
It is easy to create a simply laced example with the same difficulty, by building a rank $4$ simply laced Coxeter group which folds to this example.

\section{Alignment} \label{Alignment sec}
The results of~\cite{typefree} make significant use of a skew-symmetric form $\omega_c$ on $V$ defined by setting $\omega_c(\alpha_r\ck, \alpha_s) = A_{rs}$ if $r\to s$.
The form $\omega_c$ provides, in particular, a characterization \cite[Proposition~3.11]{typefree} of $c$-sorting words for $c$-sortable elements and a characterization \cite[Theorem~4.2]{typefree} of inversion sets of $c$-sortable elements.
The two characterizations are as follows:
\begin{theorem} \label{AlignmentWord}
Let $c$ be a Coxeter element of~$W$.
Let $a_1 a_2 \cdots a_k$ be a reduced word for $w\in W$. 
Set $t_1 = a_1$, $t_2 = a_1 a_2 a_1$, \ldots, $t_k = a_1 a_2 \cdots a_k \cdots a_2 a_1$.
Then the following are equivalent:
\begin{enumerate}
 \item~$w$ is $c$-sortable and $a_1 a_2 \cdots a_n$ can be transformed into a $c$-sorting word for~$w$ by a sequence of transpositions of adjacent commuting letters.
\item For $i<j$, we have $\omega(\beta_{t_i}, \beta_{t_j}) \geq 0$, with strict inequality holding unless $t_i$ and $t_j$ commute.
\end{enumerate}
\end{theorem}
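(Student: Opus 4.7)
The plan is to prove the equivalence by establishing that both conditions are invariant under swapping adjacent commuting letters of the reduced word, and then verifying each direction on a canonical representative of each commutation class. Invariance of (1) is tautological. For invariance of (2), swapping adjacent commuting $a_i, a_{i+1}$ merely exchanges $t_i$ and $t_{i+1}$ while leaving all other $t_j$ fixed, so the only inequality that might flip is $\omega_c(\beta_{t_i}, \beta_{t_{i+1}}) \ge 0$, and invariance reduces to the claim that $\omega_c(\beta_t, \beta_{t'}) = 0$ whenever two reflections $t, t'$ commute. I would verify this by restricting to the rank-two dihedral parabolic generated by $t$ and $t'$ (or rather by the simple reflections whose product gives the commuting pair), where the vanishing follows from the defining relation $\omega_c(\alpha_r\ck, \alpha_s) = A_{rs}$ for $r \to s$ together with the fact that commuting simple generators satisfy $A_{rs} = 0$.

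For the forward direction (1) $\Rightarrow$ (2), it suffices to verify (2) for the canonical $c$-sorting word of a $c$-sortable element $w$. Such a word decomposes into blocks $I_1 | I_2 | \cdots$ with $I_1 \supseteq I_2 \supseteq \cdots$, each block listing the elements of its support in an order compatible with $c$. Within a single block all letters commute, so inversions $t_i, t_j$ arising inside the same block contribute $\omega_c(\beta_{t_i}, \beta_{t_j}) = 0$, which is allowed. For inversions arising in distinct blocks one computes the expansions of $\beta_{t_i}$ and $\beta_{t_j}$ in simple roots using Lemma~\ref{Inversion Formula} and pairs them via $\omega_c$; here the orientation data of $c$ makes every cross-term in the pairing nonnegative, and the required strict positivity when $t_i, t_j$ do not commute follows from the presence of at least one strictly oriented edge in the supports.

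For the reverse direction (2) $\Rightarrow$ (1), I would proceed by induction on $\ell(w)$. Given a word $a_1 \cdots a_k$ satisfying (2), the goal is to show that some letter of the word corresponding to a $c$-source in the support of $w$ can be commuted to the first position, and that stripping this letter yields a shorter word still satisfying (2) with respect to the rotated Coxeter element $scs$; the inductive hypothesis then recovers both the $c$-sorting word for $w$ and the $c$-sortability. The choice of which source letter to extract is dictated by the inequalities: the first block of the prospective $c$-sorting word must consist precisely of those simple generators $s$ appearing in the word such that $\beta_s = \alpha_s$ occurs as one of the $\beta_{t_i}$, and the alignment inequalities ensure consistency of this choice. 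The main obstacle is this reverse direction, and specifically the step of translating the algebraic alignment inequalities of (2) into the combinatorial statement that a source letter can always be brought to the front within its commutation class; verifying that the residual word for $sw$ still obeys (2) with respect to $scs$ is also delicate, because conjugation by $s$ transforms each remaining reflection $t_j$ into $s t_j s$ and one must control how $\omega_c$ transforms under this conjugation.
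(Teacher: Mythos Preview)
This theorem is not proved in the present paper; it is quoted from \cite{typefree} as background (it is essentially \cite[Proposition~3.11]{typefree}), so there is no proof here against which to compare your proposal.

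Your proposal nonetheless contains two genuine errors worth flagging.

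First, the claim that ``within a single block all letters commute'' in a $c$-sorting word is false. The blocks $I_1 \supseteq I_2 \supseteq \cdots$ are arbitrary subsets of $S$, each listed in the order induced by a fixed reduced word for $c$; nothing forces the elements of a block to commute pairwise. For instance, the $c$-sorting word for $c$ itself consists of a single block equal to all of $S$. Your forward-direction computation therefore does not even handle inversions arising inside one block.

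Second, your reduction of the invariance of (2) to the vanishing $\omega_c(\beta_t, \beta_{t'}) = 0$ for all commuting reflections $t, t'$ is a correct reduction, but the vanishing statement is false. The form $\omega_c$ is \emph{not} $W$-invariant, so one cannot conjugate down to commuting simple reflections as you suggest. Concretely, in type $A_3$ with $c = s_1 s_2 s_3$, the reflections $s_1 s_2 s_1$ and $s_2 s_3 s_2$ commute (they are the disjoint transpositions $(13)$ and $(24)$ in $S_4$), yet
\[
\omega_c(\alpha_1 + \alpha_2,\ \alpha_2 + \alpha_3)
= \omega_c(\alpha_1,\alpha_2) + \omega_c(\alpha_2,\alpha_3) = 1 + 1 = 2 \neq 0.
\]
Invariance of (2) under commutation moves does hold---it is forced by the equivalence with the manifestly invariant condition (1)---but any direct argument must use more than the bare commutativity of $t_i$ and $t_{i+1}$.

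Your inductive outline for $(2) \Rightarrow (1)$ is closer in spirit to the argument in \cite{typefree}: one shows that (2) forces the word to begin, up to commutations, with a letter $s$ initial in $c$, and then recurses on $sw$ with the Coxeter element $scs$. The substantive content there is an analysis of how $\omega_c$ relates to $\omega_{scs}$ after applying $s$, which is precisely the step you identify as delicate and leave unresolved.
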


\begin{theorem}\label{AlignmentInversions}
Let $c$ be a Coxeter element of~$W$ and let $w\in W$.
Then the following are equivalent:
\begin{enumerate}
 \item~$w$ is $c$-sortable.
\item Whenever $r$, $s$ and $t$ are reflections in~$W$, with $\beta_{s}$ in the positive span of $\beta_r$ and $\beta_t$ and $\omega_c(\beta_r, \beta_t) \geq 0$, then $\inv(w) \cap \{ r,s,t \}$ is either $\emptyset$, $\{ r \}$, $\{ r, s \}$, $\{ r, s, t \}$ or $\{ t \}$.
\end{enumerate}
\end{theorem}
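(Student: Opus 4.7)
My plan is to use Theorem~\ref{AlignmentWord} as the bridge between the word-level and the root-level descriptions of $c$-sortability. The forward direction $(1)\Rightarrow(2)$ combines Theorem~\ref{AlignmentWord} with Pilkington's Lemma~\ref{PilkLemma} and a short rank-two calculation with $\omega_c$, while the reverse direction $(2)\Rightarrow(1)$ proceeds by induction on $\ell(w)$ along the recursive structure of $c$-sortable elements.

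For $(1)\Rightarrow(2)$, I fix a $c$-sorting word $a_1 a_2 \cdots a_k$ for $w$ and enumerate its inversions as $t_i = a_1 \cdots a_i \cdots a_1$. Theorem~\ref{AlignmentWord} then gives $\omega_c(\beta_{t_i},\beta_{t_j}) \geq 0$ for $i<j$, with strict inequality unless $t_i$ and $t_j$ commute. Now fix a triple $(r,s,t)$ as in the theorem. Because $\inv(w)$ is a genuine inversion set, Lemma~\ref{PilkLemma} already rules out the patterns $\{s\}$ and $\{r,t\}$; the real task is to exclude $\inv(w) \cap \{r,s,t\} = \{s,t\}$. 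Writing $\beta_s = a\beta_r + b\beta_t$ with $a,b\geq 0$, bilinearity of $\omega_c$ yields $\omega_c(\beta_r,\beta_s) = b\,\omega_c(\beta_r,\beta_t) \geq 0$ and $\omega_c(\beta_s,\beta_t) = a\,\omega_c(\beta_r,\beta_t) \geq 0$. Suppose $s=t_i$ and $t=t_j$. If $j<i$, then also $\omega_c(\beta_t,\beta_s) \geq 0$, and combined with $\omega_c(\beta_s,\beta_t) \geq 0$ this forces the whole triple of $\omega_c$-pairings to vanish; the strict-inequality clause then forces $s$ and $t$ to commute, and the rank-two geometry of the reflection subgroup generated by $r$ and $t$ propagates this to commutation with $r$ and eventually to the contradiction $r\in \inv(w)$. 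If $i<j$, a symmetric argument based on inserting $\beta_r$ into the enumeration derives the same conclusion.

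For $(2)\Rightarrow(1)$, I fix a reduced word $s_1 s_2 \cdots s_n$ for $c$ and induct on $\ell(w)$. If $s_1 \not\leq w$, then every positive root $\beta_t$ with $t\in\inv(w)$ is supported on $\{\alpha_{s_i}:i\ge 2\}$; the alignment hypothesis restricts cleanly to the parabolic $W_{\{s_2,\ldots,s_n\}}$ with respect to the Coxeter element $s_2 \cdots s_n$, and the inductive hypothesis combined with the recursive definition of $c$-sortability completes that case. If $s_1 \leq w$, pass to $w' = s_1 w$ and the rotated Coxeter element $c' = s_1 c s_1 = s_2 \cdots s_n s_1$. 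The work is to verify that $w'$ satisfies the alignment condition relative to $\omega_{c'}$, using the bijection $t\mapsto s_1 t s_1$ from $\inv(w)\setminus\{s_1\}$ to $\inv(w')$ together with the explicit transformation law relating $\omega_c$ and $\omega_{c'}$ under the orientation flip at the edges incident to $s_1$. Induction then gives that $w'$ is $c'$-sortable, so $w$ is $c$-sortable.

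The main obstacle, I expect, is the degenerate case in $(1)\Rightarrow(2)$ where $\omega_c(\beta_r,\beta_t)=0$: all of the inequalities above become equalities and one must extract useful information from the commutation clause of Theorem~\ref{AlignmentWord} rather than from strict positivity, then promote the resulting commutation relations among $r$, $s$, $t$ to the membership $r\in\inv(w)$ via the geometry of the rank-two reflection subgroup. A secondary obstacle is the transformation of $\omega_c$ under conjugation by $s_1$ in the $(2)\Rightarrow(1)$ step; because the defining formula $\omega_c(\alpha_r\ck,\alpha_s)=A_{rs}$ depends on an orientation that reverses at $s_1$, the naive compatibility formula fails on $\alpha_{s_1}$, and this simple root must be treated separately rather than folded into a symmetric argument.
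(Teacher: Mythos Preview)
The paper does not prove Theorem~\ref{AlignmentInversions} at all: it is quoted verbatim from \cite[Theorem~4.2]{typefree} as background for Section~\ref{Alignment sec}. So there is no in-paper proof to compare your outline against, and in particular the direction $(2)\Rightarrow(1)$ is entirely deferred to \cite{typefree}. Your inductive plan for that direction (split on whether $s_1\le w$, rotate $c\mapsto s_1cs_1$) has the right shape, but as you yourself note, the transfer of the alignment condition from $(w,\omega_c)$ to $(s_1w,\omega_{s_1cs_1})$ is the whole content and cannot be waved through.

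For $(1)\Rightarrow(2)$, however, the paper \emph{does} contain the relevant argument, in the proof of the Proposition immediately following Counterexample~\ref{alignment problems}: it shows that condition~(2) of Theorem~\ref{AlignmentWord} implies condition~(2) of Theorem~\ref{AlignmentInversions}, which together with Theorem~\ref{AlignmentWord} gives $(1)\Rightarrow(2)$. That argument is much shorter than yours and uses a trick you never invoke: after computing $\omega_c(\beta_s,\beta_t)=a\,\omega_c(\beta_r,\beta_t)\ge 0$, one concludes $i<j$, then simply truncates to $w'=a_1\cdots a_i$, so that $\inv(w')\cap\{r,s,t\}=\{s\}$, contradicting Lemma~\ref{PilkLemma}. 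Your route instead tries, in both orderings of $i$ and $j$, to push commutation relations around the dihedral subgroup generated by $r$ and $t$ until you force $r\in\inv(w)$. This is muddled in two places. First, in the case $j<i$ you correctly reach ``$s$ and $t$ commute,'' but the right conclusion is immediate contradiction (two distinct reflections in the dihedral subgroup $\langle r,t\rangle$ with $\beta_s$ a strictly positive combination of $\beta_r,\beta_t$ never commute), not a further deduction that $r\in\inv(w)$. Second, your handling of $i<j$ (``a symmetric argument based on inserting $\beta_r$ into the enumeration'') does not make sense as stated: $r\notin\inv(w)$, so $\beta_r$ is nowhere in the list $\beta_{t_1},\ldots,\beta_{t_k}$, and there is nothing to insert. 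The case $i<j$ is precisely the easy case handled by the paper's truncation trick.
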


One can define an analogous skew-symmetric form on $V$ in the case of orientations with cycles. 
Define $\omega_{\Omega}$  by $\omega_{\Omega}(\alpha_r\ck, \alpha_s) = \pm A_{rs}$, where the positive sign is taken if $r \to_{\Omega} s$ and the negative sign if $s \to_{\Omega} r$. 
If there is no edge between $r$ and $s$ then $A_{rs}=0$, and $\omega_{\Omega}(\alpha_{r}\ck, \alpha_s)=0$.
The following is easily verified, by reduction to the acyclic case:
When $c$ is replaced by~$\Omega$ in either Theorem~\ref{AlignmentWord} or~\ref{AlignmentInversions}, the first condition still implies the second. 
Unfortunately, the reverse implications are no longer valid.
More precisely:

\begin{counterexample}\label{alignment problems}
There exists a Cartan matrix $A$, an orientation~$\Omega$ of $\Gamma$ and an element~$w$ with reduced word $a_1 a_2 \cdots a_k$ such that:
\begin{enumerate}
\item~$w$ is not $\Omega$-sortable; but
\item For $i<j$, we have $\omega_c(\beta_{t_i}, \beta_{t_j}) \geq 0$, with strict inequality holding unless $t_i$ and $t_j$ commute; and
\item Whenever $r$, $s$ and $t$ are reflections in~$W$, with $\beta_{s}$ in the positive span of $\beta_r$ and $\beta_t$ and $\omega_c(\beta_r, \beta_t) \geq 0$, then $\inv(w) \cap \{ r,s,t \}$ is either $\emptyset$, $\{ r \}$, $\{ r, s \}$, $\{ r, s, t \}$ or $\{ t \}$.
\end{enumerate}
\end{counterexample}

The third condition in Counterexample~\ref{alignment problems} may appear to be hard to check. 
Fortunately, it is redundant.

\begin{prop}
If $A$,~$\Omega$,~$w$ and $a_1a_2\cdots a_k$ are chosen so that condition (2) of Counterexample~\ref{alignment problems} holds, then condition (3) holds as well.
\end{prop}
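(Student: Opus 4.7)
The plan is proof by contradiction. Assume that condition~(3) fails, so there exist reflections $r,s,t$ with $\beta_s=p\beta_r+q\beta_t$ for some $p,q\ge 0$, with $\omega_\Omega(\beta_r,\beta_t)\ge 0$, but with $\inv(w)\cap\{r,s,t\}$ outside the five allowed sets $\emptyset,\{r\},\{r,s\},\{r,s,t\},\{t\}$. Since $\inv(w)$ is itself a valid inversion set, Lemma~\ref{PilkLemma} rules out $\{s\}$ and $\{r,t\}$, so the only remaining possibility is $\inv(w)\cap\{r,s,t\}=\{s,t\}$. The cases $p=0$ and $q=0$ would force $s=t$ or $s=r$ respectively, each incompatible with having $s\in\inv(w)$ and $r\notin\inv(w)$, so $p,q>0$.

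The central computation is the bilinearity identity
\[ \omega_\Omega(\beta_s,\beta_t) \;=\; p\,\omega_\Omega(\beta_r,\beta_t) + q\,\omega_\Omega(\beta_t,\beta_t) \;=\; p\,\omega_\Omega(\beta_r,\beta_t) \;\ge\; 0, \]
valid because the symmetrizability of $A$ makes $\omega_\Omega$ extend to a well-defined skew-symmetric bilinear form on $V\times V$. Next, locate $s=t_j$ and $t=t_i$ in the given reduced word. If $j<i$, then the prefix $a_1\cdots a_{i-1}$ is a reduced word whose inversion set $\{t_1,\dots,t_{i-1}\}$ contains $s$ but neither $t$ (added only at step $i$) nor $r$ (never added, since $r\notin\inv(w)$); its intersection with $\{r,s,t\}$ is therefore $\{s\}$, contradicting Lemma~\ref{PilkLemma} applied to this prefix inversion set. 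So we may assume $i<j$, and condition~(2) applied to the pair $(t_i,t_j)=(t,s)$ yields $\omega_\Omega(\beta_t,\beta_s)\ge 0$, hence $\omega_\Omega(\beta_s,\beta_t)\le 0$. Combined with the bilinearity bound this forces $\omega_\Omega(\beta_s,\beta_t)=0$, and the strict-inequality clause of~(2) then forces $s$ and $t$ to commute as reflections.

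The main obstacle is closing this ``commuting case.'' The plan is to observe that adjacent commutations in the reduced word preserve condition~(2)---swapping commuting simples $a_l,a_{l+1}$ exchanges $t_l$ with $t_{l+1}$ and negates $\omega_\Omega(\beta_{t_l},\beta_{t_{l+1}})$, a quantity which must already be zero because the swapped reflections commute---and to exploit this mobility to migrate $s$ leftward past each intervening inversion $t_l$ with $i\le l<j$ until $s$ precedes $t$ in the resulting inversion order. Once this rearrangement is achieved, the situation becomes the already-dispatched case $j<i$ and yields the desired Pilkington contradiction. The technical heart of the proof, and the step where I expect the greatest difficulty, is justifying that this migration can always be carried out: when some intermediate $t_l$ fails to commute with $s$ the direct adjacent swap is blocked, and one must instead leverage condition~(2) applied to the pairs $(t_l,s)$ together with the bilinearity relations on the triple $\{r,s,t_l\}$ (and possibly braid moves in addition to commutation moves) to route around such obstructions.
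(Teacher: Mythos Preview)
Your approach through the bilinearity identity $\omega_\Omega(\beta_s,\beta_t)=p\,\omega_\Omega(\beta_r,\beta_t)\ge 0$ and the prefix/Pilkington contradiction is exactly the paper's argument; the paper simply asserts ``Since $\omega_\Omega(\beta_s,\beta_t)\ge 0$, we have $i<j$'' and then takes the prefix ending at~$s$. You are right to isolate the edge case $\omega_\Omega(\beta_s,\beta_t)=0$ with $s,t$ commuting, which the paper does not explicitly address.

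However, your proposed resolution of that case---migrating $s$ leftward past intermediate inversions via commutation and braid moves---is not a proof. You yourself flag the obstruction (an intermediate $t_l$ not commuting with $s$) and offer only a vague plan to ``route around'' it using bilinearity relations and unspecified braid moves. There is no reason to expect such rearrangements can always be carried out while preserving condition~(2), and you give no argument that they can. As written, this is a genuine gap.

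The commuting case is in fact vacuous, for a much simpler reason. All three of $r,s,t$ lie in the rank-two reflection subgroup $D=\langle r,t\rangle$, and since a skew-symmetric form on a $2$-plane is determined by a single value, $\omega_\Omega(\beta_r,\beta_t)=0$ forces $\omega_\Omega$ to vanish identically on $\Span(\beta_r,\beta_t)$. Now $s$ and $t$ commuting and distinct forces $D$ to be finite dihedral of order $2m$ with $m\ge 4$ even, so there is a positive root $\beta_u$ of $D$ strictly between $\beta_s$ and $\beta_t$. Lemma~\ref{PilkLemma} applied to $(s,u,t)$ gives $u\in\inv(w)$, and $u$ does not commute with $s$ (the only reflection of $D$ commuting with $s$ is $t$). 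But $\beta_u\in\Span(\beta_r,\beta_t)$, so $\omega_\Omega(\beta_s,\beta_u)=0$, contradicting the strict-inequality clause of condition~(2). This closes the case cleanly, without any rearrangement of the reduced word.
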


\begin{proof}
In light of Lemma~\ref{PilkLemma}, we need only rule out the case where $\inv(w) \cap \{ r,s,t \}=\{ s,t \}$. 
Let $i$ and $j$ be such that $s = t_i$ and $t = t_j$.
Since $\omega_{\Omega}(\beta_s, \beta_t) \geq 0$, we have $i < j$.  
Set $w' = a_1 a_2 \cdots a_i$. Then $\inv(w') \cap \{ r,s,t \} = \{ s \}$, contradicting Lemma~\ref{PilkLemma}.
\end{proof}

Thus, to give a counter-example, we need only check conditions (1) and (2).
Consider a counter-example of rank $3$ with $B$-matrix  
$$\begin{pmatrix} 0 & 1 & -1 \\ -1 & 0 & 3 \\ 1 & -3 & 0 \end{pmatrix}$$
 with simple reflections $p$, $q$ and $r$.
Then $pqr$ is not $\Omega$-sortable, as its support is a cycle. 
But the corresponding inversion sequence is $p$, $pqp$, $pqrqp$ with roots 
\begin{alignat*}{20}
\beta_1 :=& \ \beta_p \ &=& \  &\alpha_p & & & & & \\
\beta_2 :=& \ \beta_{p q p} \ &=& \ &\alpha_p & {}+& \alpha_{q} & & &   \\
\beta_3 :=& \ \beta_{p q r q p} \  &=& \ 4 &\alpha_p & {}+& 3  \alpha_q & {}+& \alpha_r 
\end{alignat*}
We have  $\omega_{\Omega}(\beta_1, \beta_2) = 1$, $\omega_{\Omega}(\beta_1, \beta_3) =  2$, and $\omega_{\Omega}(\beta_2, \beta_3) = 1$.
All of these are positive, so this is a counterexample. 

\begin{remark} 
The definition of $\omega_{\Omega}$ depends not only on $\Omega$ and on the Coxeter group $W$, but also on the choice of a Cartan matrix.  
To illustrate the effect of this choice, consider a modification of the example above, with the entries $3$ and $-3$ replaced by $2$ and $-2$ respectively.
The Coxeter group $W$ is unchanged, $pqr$ is still not $\Omega$-sortable, and $\beta_1$ and $\beta_2$ are unchanged, while $\beta_3$ becomes $3\alpha_p+2\alpha_q+\alpha_r$.
We calculate $\omega_{\Omega}(\beta_1, \beta_2) = 1$, $\omega_{\Omega}(\beta_1, \beta_3) =  1$, and $\omega_{\Omega}(\beta_2, \beta_3) = 0$.
Since $pqp$ and $pqrqp$ do not commute, condition (2) of Counterexample~\ref{alignment problems} fails, and the modified example is not a counterexample.
\end{remark}

\begin{remark}
A preprint version of this paper proposed a different counter-example.  
We are grateful to the referee for pointing out that the earlier example was in error.  
\end{remark}

\end{document}